\newtheorem{theorem}{Theorem}[section]
\newtheorem{lemma}[theorem]{Lemma}
\newtheorem{proposition}[theorem]{Proposition}
\newtheorem{corollary}[theorem]{Corollary}
\newtheorem{remark}[theorem]{Remark}
\newtheorem{conjecture}{Conjecture}
\numberwithin{equation}{section}
\numberwithin{figure}{section}
\renewcommand{\div}{\mathrm{div}} 
\def\intave#1{\int_{#1}\hbox{\llap{$\raise2.3pt\hbox{\vrule
height.9pt width7pt}\phantom{\scriptstyle{#1}}\mkern-2mu$}}}
\begin{document}
\title{Some Energy Estimates for Stable Solutions to Fractional Allen-Cahn Equations}

\author{Changfeng Gui}
\address{Department of Mathematics, The University of Texas at San Antonio, San Antonio, TX 78249}
\email{\tt changfeng.gui@utsa.edu}
\author{Qinfeng Li}
 \address{Department of Mathematics,
The University of Texas at San Antonio, San Antonio, TX 78249}
\email{\tt qinfeng.li@utsa.edu}
                  
\date{}

\begin{abstract}
In this paper we study stable solutions to the fractional equation \begin{align}
\label{AC1}
    (-\Delta)^s u =f(u), \quad |u| < 1 \quad  \mbox{in $\mathbb{R}^d$},
\end{align}where $0<s<1$ and $f:[-1,1] \rightarrow \mathbb{R}$ is a $C^{1,\alpha}$ function for $\alpha>\max\{0, 1-2s\}$. We obtain sharp energy estimates for $0<s<1/2$ and rough energy estimates for $1/2 \le s <1$. These lead to a different proof from literature of the fact that when $d=2, \, 0<s<1$, entire stable solutions to \eqref{AC1} are $1$-D solutions.

The scheme used in this paper is inspired by Cinti-Serra-Valdinoci \cite{CSE} which deals with stable nonlocal sets, and Figalli-Serra \cite{FS17} which studies stable solutions to \eqref{AC1} for the case $s=1/2$. 
\end{abstract}

\maketitle

\section{Introduction}
\subsection{Nonlocal Stable De Giorgi Conjeture}
It is well known that for $0<s<1$, the fractional $s$-Laplacian is defined as 
\begin{align}
    (-\Delta)^s u(x):=&C(d,s) (P.V.) \int_{\mathbb{R}^d} \frac{u(x)-u(y)}{|x-y|^{d+2s}}dy\\
    =&\frac{C(d,s)}{2}\int_{\mathbb{R}^d} \frac{2u(x)-u(x+y)-u(x-y)}{|x-y|^{d+2s}}dy,
\end{align}where $C(d,x)$ is a constant such that 
\begin{align}
    \hat{-(\Delta)^s u}(\xi)=|\xi|^{2s}\hat{u}(\xi)
\end{align}

For  $\Omega \subset \mathbb{R}^d$, we consider the fractional Allen-Cahn type equation \begin{align*}
    (-\Delta)^s u =f(u), \quad |u|<1\quad  \, \mbox{in $\Omega$},
\end{align*}
which is the vanishing condition for the first variation of the energy 
\begin{align*}
    \label{totalenergy}
    \mathcal{J}(u,\Omega)=&\mathcal{J}^s(u,\Omega)+\mathcal{J}^P(u,\Omega)\\
    :=&\frac{1}{2}\int\int_{\mathbb{R}^d \times \mathbb{R}^d \setminus (\Omega^c \times \Omega^c)} \frac{|u(x)-u(y)|^2}{|x-y|^{d+2s}}dydx\\
    &+\int_{\Omega} F(u(x))  dx,
\end{align*}up to normalization constants that we omitted for simplicity.

Throughout the paper we assume that $F$ is the primitive function of a given $C^{1,\alpha}$ function $f: [-1,1] \rightarrow \mathbb{R}$, where $\alpha>\max\{0, 1-2s\} $. The regularity of $f$ is to guarantee that any solution $u$ to \eqref{AC1} is in $C^2(\mathbb{R}^d)$ so that the fractional Laplacian is well defined, see for example \cite[Lemma 4.4]{CY14} for the proof. We also throughout the paper assume that $F: \mathbb{R} \rightarrow [0, \infty)$ is a double well potential with two minima $-1$ and $1$. This is the sufficient and necessary condition to guarantee the existence of $1$-D layer solutions to \eqref{AC1}, see \cite[Theorem 2.4]{CY15}. Recall that \textit{layer solutions} are solutions that are monotone in one variable and have limits $\pm 1$ at $\pm \infty$.

In this paper, we study stable solutions to the fractional Allen-Cahn equation \eqref{AC1}. Recall that $u$ is a \textit{stable solution} to \eqref{AC1}, if the second local variation of $\mathcal{J}(\cdot, \mathbb{R}^d)$ at $u$ is nonnegative. Or equivalently,
\begin{align*}
   \int_{\mathbb{R}^d} \left((-\Delta)^s v+f'(u)v\right)v \ge 0, \quad \forall v \in C_0^{2}(\mathbb{R}^d).
\end{align*}
Note that stable solutions include local minimizers or monotone stationary solutions of $\mathcal{J}(\cdot, \mathbb{R}^d)$. Also it is known that $1$-D stable solutions are layer solutions, see the proof of \cite[Lemma 3.1]{DFV18} and \cite[Theorem 2.12]{CY15}.

We would like to study the symmetry results of stable entire solutions to \eqref{AC1}, which is related to the nonlocal version of De Giorgi Conjecture for stable solutions: 
\begin{conjecture}[Nonlocal Stable De Giorgi Conjecture]
\label{stable conjecture}
Let $0<s<1$ and $u$ be a stable solution to \eqref{AC1}, then $u$ is a $1$-D solution for $d \le 7$.
\end{conjecture}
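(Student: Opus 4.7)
The plan is to approach Conjecture~\ref{stable conjecture} via the energy-method of Cinti-Serra-Valdinoci \cite{CSE} for stable nonlocal minimal sets and of Figalli-Serra \cite{FS17} for the case $s=1/2$. I expect a complete proof only for $d=2$; obstructions appear already for $d=3$, and the cases $3 \le d \le 7$ would require a substantial new nonlocal rigidity result on stable nonlocal minimal cones, which is itself open.

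\textbf{Step 1: Energy estimate on balls.} I would first establish, for any stable solution $u$ of \eqref{AC1}, a bound of the form $\mathcal{J}(u,B_R) \le C R^{d-1}$, sharp when $0<s<1/2$, and a rougher estimate (with logarithmic or polynomial correction) when $1/2\le s<1$. The strategy is to insert into the stability inequality a carefully chosen competitor modeled on the classical $v=|\nabla u|\eta$ of De Giorgi-Modica, together with the linearized identity $(-\Delta)^s u_{x_i} = f'(u)\,u_{x_i}$. For $s<1/2$ the Gagliardo seminorm of $v$ interacts well with a radial cutoff $\eta$, and a rearrangement/interpolation argument should yield the sharp surface scaling; for $s\ge 1/2$ one loses sharpness because the kernel becomes too singular.

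\textbf{Step 2: Blow-down and classification in $d=2$.} Rescale $u_R(x):=u(Rx)$ and pass to a weak limit using the energy estimate (note that in $d=2$ the bound $\mathcal{J}(u,B_R)\le CR$ is critical). The rescaled stability carries over. I would then invoke a geometric Poincaré-type inequality in the spirit of Sternberg-Zumbrun, combined with stability in two dimensions, to force $\nabla u$ to be parallel along connected components of level sets; by connectedness of planar level sets this yields one-dimensional symmetry. Alternatively, one can use a nonlocal Liouville theorem for positive solutions of the linearized equation, exploiting the particular positive solution $u_{x_i}$ provided that $u$ is monotone in some direction (a reduction obtainable via De Giorgi-type arguments).

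\textbf{Main obstacle.} The hardest part is Step~1 for $1/2\le s<1$: the Gagliardo seminorm no longer decouples cleanly from the cutoff because its most singular contribution lives on the diagonal, and a naive computation yields a divergent quantity. Following the Cinti-Serra-Valdinoci scheme, one should replace the competitor by a truncation of $u$ outside a large ball combined with a rearrangement along horizontal slices, keeping track of nonlocal cross terms between the interior and exterior of the ball. For the full conjecture in $d\le 7$, the deeper obstruction is that even with sharp energy estimates the blow-down limit is only known to be a stable nonlocal minimal set, and the classification of such sets up to $d=7$ remains entirely open.
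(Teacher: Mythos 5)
Your proposal correctly recognizes that Conjecture~\ref{stable conjecture} is only within reach for $d=2$, and that the route goes through energy estimates in the style of \cite{CSE} and \cite{FS17}; this matches the paper, which validates the conjecture only for $d=2$ (Theorem~\ref{stable2d}) as a byproduct of Proposition~\ref{generalenergyestimates}. However, your Step~1 is not how the paper obtains the energy estimate, and as described it has a genuine gap. Plugging the Sternberg--Zumbrun competitor $v=|\nabla u|\eta$ into the stability inequality produces a geometric Poincar\'e-type inequality (curvature of level sets controlled by the cutoff gradient), which in the local case is used to classify stable solutions in low dimensions, \emph{not} to derive the area-type growth $\mathcal{J}^s(u,B_R)\le CR^{d-1}$ for a stable solution; no amount of ``rearrangement/interpolation'' turns that inequality into an a priori energy bound, and in the nonlocal setting there is the additional difficulty that one has no pointwise Kato/Bochner identity. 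What the paper actually does is entirely different: it tests stability with the \emph{domain variations} $P_{t,\mathbf v}u=u\circ\psi_{t,\mathbf v}^{-1}$, combines this with the strict submodularity of the Gagliardo seminorm (Lemma~\ref{identity}), and obtains the product estimate \eqref{1}, i.e.\ $\bigl(\int_{B_{1/2}}(\partial_{\mathbf v}u)_+\bigr)\bigl(\int_{B_{1/2}}(\partial_{\mathbf v}u)_-\bigr)\le C\,\mathcal{J}^s(u,B_R)/R^2$. This yields a local BV bound by $1+\sqrt{\mathcal{J}^s(u,B_1)}$ (Lemma~\ref{2.2}); the BV bound is then fed into the interpolation Lemma~\ref{embedding}, and a Simon-type absorption lemma closes the loop, giving $\int_{B_R}|\nabla u|\le C R^{d-1}$ and $\mathcal{J}^s(u,B_R)\le CR^{d-2s}$ for $s<1/2$ (with logarithmic corrections for $s\ge 1/2$).

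Your Step~2 is also heavier than what the paper needs. Once Proposition~\ref{generalenergyestimates} is in hand, in $d=2$ the quantity $\mathcal{J}^s(u,B_R)/R^2$ tends to $0$ (since $d-2s<2$ for $s<1/2$, and $R^{2s}\log^2(M_0R)/R^2\to 0$ for $s\ge 1/2$), so the right-hand side of \eqref{1} vanishes as $R\to\infty$, forcing $(\partial_{\mathbf v}u)_+\equiv 0$ or $(\partial_{\mathbf v}u)_-\equiv 0$ for each direction $\mathbf v$, hence monotonicity in every direction and one-dimensionality --- no blow-down, no Liouville theorem, and no classification of blow-down limits is invoked. Your observation about the main obstacle for $3\le d\le 7$ is accurate and consistent with the paper (and with the remark that the sharp estimates for $s\ge1/2$ are not yet available by this method), but if you want to reproduce the paper's $d=2$ result without the Caffarelli--Silvestre extension, the Sternberg--Zumbrun ansatz should be replaced by the translation/submodularity/absorption scheme above.
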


\hfill\\

\subsection{Background and Motivation of Conjecture \ref{stable conjecture}}
In 1979, De Giorgi made the following conjecture on the entire solutions to classical Allen-Cahn equations:
\begin{conjecture}[Classical De Girogi Conjecture]
\label{DeGiorgimonotone}
If $u$ is a solution to the classical Allen-Cahn equation 
\begin{align}
    -\Delta u= u-u^3, \quad  |u|<1 \quad \mbox{in $\mathbb{R}^d$},
\end{align}with $\partial _{x_d}u>0$, then $u$ is a $1$-D solution if $d \le 8$. 
\end{conjecture}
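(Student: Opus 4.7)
The plan is to exploit the fact that monotonicity $\partial_{x_d}u>0$ upgrades $u$ to a stable solution, and then reduce the symmetry question to a Liouville-type rigidity for quotients of partial derivatives. Differentiating $-\Delta u=f(u)$ in $x_d$ shows that $\phi:=\partial_{x_d}u$ is a strictly positive solution of the linearized equation $-\Delta\phi=f'(u)\phi$; testing $\int f'(u)\psi^{2}$ against $\phi^{-1}$ and integrating by parts yields the stability inequality for $u$. For each $i=1,\dots,d-1$, the ratio $\sigma_i:=\partial_{x_i}u/\phi$ is then well defined, and a short computation gives the degenerate elliptic equation $\operatorname{div}(\phi^{2}\nabla\sigma_i)=0$ in $\mathbb{R}^d$. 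The symmetry conclusion would follow from showing $\sigma_i\equiv c_i$ for each $i$, since then $u$ is constant on hyperplanes perpendicular to $(c_1,\dots,c_{d-1},1)$.

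For $d=2$ I would follow Ghoussoub-Gui: the basic energy estimate $\int_{B_R}|\nabla u|^{2}\le CR^{d-1}=CR$ reads $\int_{B_R}\phi^{2}(1+\sigma_1^{2})\le CR$, which is exactly the integral-growth hypothesis needed for the Liouville theorem applied to $\operatorname{div}(\phi^{2}\nabla\sigma_1)=0$; hence $\sigma_1$ is constant and $u$ is 1-D. For $d=3$ I would follow Ambrosio-Cabré: inserting $\eta\,\sigma_i\,\phi$ into the stability inequality and using the equation for $\sigma_i$ yields $\int\phi^{2}|\nabla\sigma_i|^{2}\eta^{2}\le\int\phi^{2}\sigma_i^{2}|\nabla\eta|^{2}$; combined with $|\phi\sigma_i|=|\partial_{x_i}u|\le C$ and a logarithmic cutoff $\eta$ in a ball of radius $R$, a three-dimensional volume count forces $\nabla\sigma_i\equiv 0$.

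For $4\le d\le 8$ the direct Liouville-via-energy approach fails: the energy grows like $R^{d-1}$ and the Ambrosio-Cabré weighted Liouville theorem no longer applies at this growth. Here the program I would attempt is Savin's geometric route: use stability and monotonicity to derive an $\varepsilon$-regularity / flatness theorem for the level sets $\{u=c\}$ at large scales, combine it with Simons' classification of stable minimal cones (valid exactly through dimension $d-1\le 7$), and conclude that the blow-downs of the level sets are hyperplanes. The main obstacle — and this is the genuinely open step for the conjecture in the form stated — is the passage from asymptotic flatness of level sets to 1-D symmetry of $u$: at present this passage is known only under the additional Gibbons-type hypothesis $u(x',x_d)\to\pm 1$ as $x_d\to\pm\infty$, and this hypothesis cannot be extracted a priori from monotonicity and stability alone in $d\ge 4$. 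Removing it requires either producing those uniform limits as a consequence of stability and monotonicity, or a new rigidity theorem for stable monotone solutions whose level sets are asymptotically flat of arbitrary slope; supplying such an input is the decisive difficulty, and is why the conjecture as stated remains open for $d\ge 4$.
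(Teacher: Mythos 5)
The statement you were asked to prove is Conjecture \ref{DeGiorgimonotone}, which the paper does not prove and cannot prove: it explicitly records that the classical De Giorgi conjecture ``in its full generality remains open,'' with only the cases $d=2$ (Ghoussoub--Gui), $d=3$ (Ambrosio--Cabr\'e), and $4\le d\le 8$ under the additional limit hypothesis \eqref{limit} (Savin) being settled, and counterexamples for $d\ge 9$. Your writeup is therefore correctly \emph{not} a proof; it is an accurate survey of the known partial results, and your identification of the genuinely open step --- passing from asymptotic flatness of level sets to $1$-D symmetry without the Gibbons-type limits $u(x',x_d)\to\pm1$ --- matches exactly the gap the paper points to. On that main point there is nothing to fault.

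Two technical slips in the parts you do sketch. For $d=2$ you invoke the energy bound $\int_{B_R}|\nabla u|^2\le CR$; the Ghoussoub--Gui argument does not need (and does not a priori have) this --- the Liouville theorem for $\operatorname{div}(\phi^2\nabla\sigma_1)=0$ only requires $\int_{B_R}(\phi\sigma_1)^2\le CR^2$, which follows from the uniform gradient bound alone. For $d=3$ your ``three-dimensional volume count'' using only $|\phi\sigma_i|\le C$ gives $\int_{B_R}(\phi\sigma_i)^2\le CR^3$, which is \emph{not} enough for the weighted Liouville theorem (one needs growth at most $R^2$, up to logarithms); the missing ingredient is the Ambrosio--Cabr\'e energy estimate $\int_{B_R}|\nabla u|^2\le CR^2$, which they derive from monotonicity by showing the limits $u^{\pm\infty}$ are $1$-D (via the $d=2$ result) and hence that $u$ is a minimizer trapped between them --- precisely the mechanism the paper reproduces in its Appendix for the nonlocal setting. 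With those two corrections your account of the state of the art is faithful, but to be clear: no proof of the conjecture as stated exists in the paper or elsewhere for $4\le d\le 8$ without hypothesis \eqref{limit}.
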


The classical De Giorgi conjecture is closely related to minimal surface theory. If $u$ is a local minimizer to the associated energy funtional \begin{align}
\label{classicalenergyfunctional}
    \mathcal{E}(u, \Omega):=\frac{1}{2}\int_{\Omega} |\nabla u|^2dx+\frac{1}{4}\int_{\Omega}(1-u^2)^2 dx,
\end{align}where $\Omega=\mathbb{R}^d$,  then $u_{\epsilon}(x):=u(x/\epsilon)$ is a minimizer to 
\begin{align*}
    \mathcal{E}_{\epsilon}(v,\epsilon \Omega)=\int_{\epsilon \Omega} \frac{\epsilon}{2}|\nabla v|^2dx +\frac{1}{4\epsilon}\int_{\epsilon \Omega} (1-v^2)^2dx.
\end{align*} Scaling and energy estimates for minimizers imply \begin{align*}
   \mathcal{E}_{\epsilon}(u_{\epsilon}, B_1)=\epsilon^{d-1}\mathcal{E}(u,B_{1/\epsilon}) \le C(d).
   \end{align*}
By Modica-Mortola Gamma convergence result \cite{MM}, $u_{\epsilon} \rightarrow \chi_E-\chi_{E^c}$ in $L^1_{loc}$ for a subsequence $\epsilon_k \rightarrow 0$, and $E$ is a perimeter minimizer in $\mathbb{R}^d$.  If $2\le d \le 8$ and $\partial E$ is a graph, then the classification of entire minimal graphs in $\mathbb{R}^d$ implies that $E$ must be a half space, and thus $\{u_{\epsilon}>t\}$ converge to a half space locally in $L^1$ for $-1<t<1$. Since $\{u_{\epsilon}>t\}={\epsilon}\{u>t\}$, De Giorgi conjectured that $\{u>t\}$ itself has to be a half space for any $t$, even for $u$ to be monotone in direction without being a minimizer.

The case when $d=2$ was proved by Ghoussoub and Gui in \cite{GG}, and the case when $d=3$ was proved by Ambrosio and Cabr\'e in \cite{AC}.   For $d \ge 9$, counterexamples were given by Del Pino, Kowalczyk and Wei \cite{dKW}. The case $4\le d\le 8$ was proved by Savin \cite{Savin} under the additional assumption that 
\begin{align}
\label{limit}
    \lim_{x_d \rightarrow \pm \infty}u(x',x_d)=\pm 1, \quad \mbox{for any $x' \in \mathbb{R}^{d-1}$}
\end{align}
The conjecture remains open for $4\le d \le8$ without the limit condition \eqref{limit}. We remark that in \cite{Savin}, only the minimality of $u$ is used, which is guaranteed by the the monotone condition and \eqref{limit}. We also remark that if the limit in \eqref{limit} is uniform, then Conjecture \ref{DeGiorgimonotone} is true in any dimension $d$ without the monotone assumption. This is proved in \cite{BBG00}, \cite{BHM00} and \cite{Far99} independently.

This conjecture in its full generality remains open.
\hfill\\

In the fractional analogue, if a solution $u$ is a minimizer to the associated energy, then $u_{\epsilon}(x):=u(x/\epsilon)$ is a minimizer to 
\begin{align*}
    \mathcal{J}_{s,\epsilon} (u, \Omega):= 
    \begin{cases}
        \epsilon^{2s-1}\mathcal{J}^s(u,\epsilon\Omega)+\frac{1}{\epsilon} \mathcal{J}^P(u,\epsilon\Omega), \quad & \mbox{if $1/2<s<1$,}\\
        \frac{1}{|\log\epsilon|}\mathcal{J}^s(u,\epsilon\Omega) +\frac{1}{\epsilon|\log\epsilon|}\mathcal{J}^P(u,\epsilon\Omega), \quad &\mbox{if $s=1/2$},\\
        \mathcal{J}^s(u,\epsilon\Omega)+\frac{1}{\epsilon^{2s}}\mathcal{J}^P(u,\epsilon \Omega), \quad & \mbox{if $0<s<1/2$.}
    \end{cases}
\end{align*}    
In \cite{SV12}, Savin and Valdinoci proved that if $\sup_{0<\epsilon<1}J_{s,\epsilon}(u_{\epsilon},\Omega)<\infty$, then $u_{\epsilon} \rightarrow \chi_E-\chi_{E^c}$ in $L^1$ up to a subsequence, where $E$ is a perimeter minimizer in $\Omega$ for $s \in [1/2,1)$ and an $s$-perimeter minimizer in $\Omega$ for $s \in (0,1/2)$.  The classification for global $s$-minimal graphs is the following, which is a combination of several works due to Cafarelli, Figalli, Valdinoci and Savin, see \cite{CV13}, \cite{FV} and \cite{SV13}. 

Let $E$ be an $s$-perimeter graph. Assume that either 
\begin{itemize}
    \item  $d=2,3$,
    \item or $d \le 8$ and $\frac{1}{2}-s \le \epsilon_0$ for some $\epsilon_0>0$ sufficiently small.
\end{itemize}Then $E$ must be a half space. 

It is not known whether the above classification result is optimal, since there are no known examples of $s$-minimal graphs other than hyperplanes, as far as we are aware.

These results motivate the following De Giorgi conjecture in the nonlocal case:
\begin{conjecture}[Nonlocal De Giorgi Conjecture]
\label{monotone conjecture}
Let $0<s<1$ and $u$ be a solution to \eqref{AC1} with
\begin{align}
    \label{monotonecondition}
\partial _{x_d}u>0,
\end{align} then $u$ is a $1$-D solution for $d \le 8$. 
\end{conjecture}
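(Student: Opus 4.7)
The plan is to follow Savin's program from the classical setting \cite{Savin}, using the energy estimates announced in the abstract as the crucial analytic input adapted to the fractional case. First, differentiating \eqref{AC1} shows that if $\partial_{x_d} u > 0$ then $v := \partial_{x_d} u$ is a positive solution of the linearized equation $(-\Delta)^s v = f'(u) v$; a standard Picone-type / substitution argument (testing the quadratic form against $\varphi^2/v$, as in the nonlocal Hardy inequality of Frank--Seiringer type) then yields that $u$ is a stable solution in the sense of the paper. This unlocks the paper's energy estimates: the sharp estimate for $0 < s < 1/2$ gives $\mathcal{J}(u, B_R) \lesssim R^{d-2s}$, and the rough estimate for $1/2 \le s < 1$ gives the expected $R^{d-1}$ control, up to a $\log R$ factor when $s = 1/2$.

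Next, rescale $u_\epsilon(x) := u(x/\epsilon)$: the energy bound of the previous step ensures $\sup_\epsilon \mathcal{J}_{s,\epsilon}(u_\epsilon, \Omega) < \infty$, so the $\Gamma$-convergence theorem of Savin--Valdinoci \cite{SV12} yields, up to a subsequence, $u_\epsilon \to \chi_E - \chi_{E^c}$ in $L^1_{\loc}$, where $E$ minimizes the $s$-perimeter (if $0 < s < 1/2$) or the classical perimeter (if $s \ge 1/2$). The monotonicity $\partial_{x_d} u > 0$ passes to the limit, so $E$ is a subgraph over $\mathbb{R}^{d-1}$.

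Apply next the classification of (fractional) minimal graphs recalled in the introduction: $E$ is a half-space unconditionally for $d \le 3$, for $d \le 8$ in the classical case via Simons' theorem, and for $d \le 8$ in the fractional case $0<s<1/2$ provided $s$ is close enough to $1/2$ (cf.\ \cite{CV13,FV,SV13}). Hence every blow-down of every super-level set $\{u > t\}$ is a half-space. Finally, a Savin-type flatness-implies-rigidity theorem for \eqref{AC1} should promote this asymptotic flatness of level sets to the conclusion that $u$ itself depends on only one variable.

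The main obstacle is the classification step in dimensions $4 \le d \le 8$: the classification of global $s$-minimal graphs is only available for $s$ near $1/2$, and the corresponding improvement-of-flatness statement for fractional Allen--Cahn is not known in full generality. Closing this gap, without additional limit conditions on $u$, is what keeps Conjecture \ref{monotone conjecture} open in that range. In contrast, when $d = 2$ the classification step is automatic, and the paper's sharp energy estimate combined with a two-dimensional rigidity argument in the spirit of Cinti--Serra--Valdinoci \cite{CSE} yields the stronger stable version directly, which is the main theorem actually proved in the paper.
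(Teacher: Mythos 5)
The statement you were asked to prove is Conjecture~\ref{monotone conjecture}, which the paper does not prove and explicitly records as open except in the low-dimensional cases collected in Theorem~\ref{longrange7}. Your write-up correctly lays out the Savin program (monotone implies stable via a Picone-type substitution; energy estimates give compactness; $\Gamma$-convergence to a perimeter- or $s$-perimeter-minimizing set; classification of minimal graphs; improvement of flatness), and in your last paragraph you yourself acknowledge that the last two steps are unavailable for $4\le d\le 8$ and general $s$. So as a matter of logic there is no proof here to compare against: both the paper and your sketch leave the conjecture open, and your sketch is essentially a recapitulation of the heuristic motivation the paper offers in the introduction rather than a proof.

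Two assertions in your sketch are, in addition, factually wrong and would sink the strategy even at the stage you describe as settled. First, you claim that the rough estimate for $1/2\le s<1$ yields ``$R^{d-1}$ control up to a $\log$.'' It does not: Proposition~\ref{generalenergyestimates} gives $\mathcal{J}^s(u,B_R)\le C\,R^{d+2s-2}\log^2(M_0R)$, which grows \emph{strictly faster} than $R^{d-1}$ once $s>1/2$, and carries a $\log^2$ (not $\log$) at $s=1/2$. Consequently the equi-boundedness $\sup_\epsilon\mathcal{J}_{s,\epsilon}(u_\epsilon,\Omega)<\infty$ needed to invoke the $\Gamma$-convergence of \cite{SV12} does \emph{not} follow from the paper's estimates in the range $s\ge 1/2$; the compactness step of your plan is itself open there, not only the classification and flatness steps. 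Second, even in the classical local case Savin's theorem requires more than ``blow-downs of all super-level sets are half-spaces'': it requires minimality of $u$, which monotonicity alone does not supply --- one also needs the limit condition~\eqref{limit}, as the paper points out immediately after Conjecture~\ref{DeGiorgimonotone}. Your sketch silently promotes the asymptotic flatness of level sets to one-dimensionality of $u$, but that promotion is precisely the content of the improvement-of-flatness machinery and is exactly what is unavailable in the fractional setting without further hypotheses.
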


Conjecture \ref{monotone conjecture} has been validated in different cases, according to the following result:
\begin{theorem}
\label{longrange7}
Let $u$ be an entire solution to \eqref{AC1} satisfying \eqref{monotonecondition}, then suppose that either $d=2, 3, \, s\in (0,1)$ or $d=4,\, s=1/2$, then $u$ is $1$-D.
\end{theorem}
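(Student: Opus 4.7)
The plan is to reduce the monotone problem to a stable one and then to apply the energy estimates established in this paper together with the blow-down/minimal-surface classification scheme of Savin--Valdinoci.

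First, I would observe that monotonicity in $x_d$ forces stability. Differentiating \eqref{AC1} in $x_d$ yields $(-\Delta)^s(\partial_{x_d}u) = f'(u)\,\partial_{x_d}u$, so $v_0 := \partial_{x_d}u > 0$ is a positive solution of the linearized equation. Testing the quadratic form on perturbations of the form $v = v_0 \varphi$, with $\varphi \in C_c^\infty(\mathbb{R}^d)$, and integrating by parts gives nonnegativity of the second variation, hence $u$ is stable. Consequently the energy estimates asserted in the abstract apply, yielding $\mathcal{J}(u,B_R) \le C R^{d-2s}$ for $0<s<1/2$ and an $R^{d-1}$-type bound (with a logarithmic correction at $s=1/2$) for $1/2 \le s < 1$.

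Next, I would carry out the blow-down. Setting $u_\epsilon(x):=u(x/\epsilon)$, the scaling identities recorded in the introduction translate the above bounds into a uniform estimate $\sup_{0<\epsilon<1}\mathcal{J}_{s,\epsilon}(u_\epsilon,B_1)<\infty$ in every regime. The Savin--Valdinoci compactness theorem \cite{SV12} then gives, along a subsequence $\epsilon_k\to 0$, convergence $u_{\epsilon_k}\to \chi_E-\chi_{E^c}$ in $L^1_{loc}$, where $E$ is an $s$-perimeter minimizer when $0<s<1/2$ and a classical perimeter minimizer when $1/2\le s<1$. Since $u$ is monotone in $x_d$, the limit set $E$ inherits this monotonicity and is therefore a subgraph over $\mathbb{R}^{d-1}$.

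The classification of minimal graphs recalled in the introduction now applies in each case of the theorem: for $d=2,3$ and $s\in(0,1/2)$ the fractional Bernstein-type theorem of Caffarelli--Figalli--Savin--Valdinoci forces $E$ to be a half-space; for $d=2,3$ and $s\in[1/2,1)$ the same conclusion holds by classical Bernstein (trivially in $\mathbb{R}^2$, by De Giorgi in $\mathbb{R}^3$); and for $d=4$, $s=1/2$ classical Bernstein in $\mathbb{R}^4$ applies. Monotonicity in $x_d$ forces the defining normal to be $e_d$, so every blow-down limit is the $1$-D step function $\chi_{\{x_d>0\}}-\chi_{\{x_d\le 0\}}$ up to translation. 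A sliding/improvement-of-flatness argument then upgrades this asymptotic flatness to the statement that the level sets of $u$ are hyperplanes orthogonal to $e_d$, i.e.\ $u$ is $1$-D. The hardest part of the argument is ensuring the energy bounds of this paper are sharp enough to sit inside the Savin--Valdinoci framework in every regime, especially around $s=1/2$ where the logarithmic scaling factor must be tracked faithfully through the rescaling, and in the borderline case $d=4$, $s=1/2$, where only a rough $R^{d-1}$ estimate is available and must be verified to match the critical scaling required by the compactness theorem.
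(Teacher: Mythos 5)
Your outline conflates two different strategies and leaves the decisive steps unsupplied. The statement you are asked to prove is not established by a blow-down argument in this paper at all: Theorem~\ref{longrange7} is stated as a summary of known results and is attributed to \cite{CSM05}, \cite{CY15}, \cite{SV09} ($d=2$), \cite{CC10}, \cite{CC14}, \cite{DFV18} ($d=3$, various $s$), and \cite{FS17} ($d=4$, $s=1/2$). Those proofs rely on direct stability arguments (Caffarelli--Silvestre extension, geometric Poincar\'e/Liouville inequalities, or, in \cite{FS17}, the BV-estimate scheme plus a bootstrap on stable solutions in $\mathbb{R}^{3}$ followed by the reduction of Remark~\ref{connection}). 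None of them goes through the $\Gamma$-convergence/blow-down route you propose, and the reasons are substantive, not cosmetic.

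Concretely, your step ``the scaling identities translate the above bounds into a uniform estimate $\sup_{\epsilon}\mathcal{J}_{s,\epsilon}(u_\epsilon,B_1)<\infty$'' fails for $s\ge 1/2$. Using $\mathcal{J}^{s}(u_\epsilon,B_1)=\epsilon^{d-2s}\mathcal{J}^{s}(u,B_{1/\epsilon})$ together with the paper's estimate
\[
\mathcal{J}^{s}(u,B_R)\le C\,R^{\,d+2s-2}\log^{2}(M_0R),\qquad \tfrac12\le s<1,
\]
one finds, for $s=1/2$,
\[
\frac{1}{|\log\epsilon|}\,\mathcal{J}^{1/2}(u_\epsilon,B_1)\ \le\ \frac{C\log^{2}(M_0/\epsilon)}{|\log\epsilon|}\ \sim\ C\,|\log\epsilon|\ \longrightarrow\ \infty,
\]
and for $s>1/2$ the rescaled energy grows like $\epsilon^{1-2s}\log^{2}(M_0/\epsilon)$, which is even worse. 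So the hypothesis needed for the Savin--Valdinoci compactness theorem simply does not follow from Proposition~\ref{generalenergyestimates} in exactly the regime $s\ge 1/2$ (in particular for the $d=4$, $s=1/2$ case). For $0<s<1/2$ the bound $\mathcal{J}^{s}(u,B_R)\le CR^{d-2s}$ does give a uniform estimate, but that is the only regime where your first step is sound.

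Even if the blow-down were available, your last step ``a sliding/improvement-of-flatness argument then upgrades this asymptotic flatness to $u$ being 1-D'' is precisely the part that cannot be taken for granted under monotonicity alone. The paper's own discussion emphasizes that in the local case the analogous upgrade requires the additional limit condition \eqref{limit} (Savin's theorem), and in the nonlocal case the improvement-of-flatness theorems (Theorems~\ref{opennonlocalbothcondition} and~\ref{minimizer}) are only available under \eqref{limit} or minimality and for $s>1/2-\epsilon_0$. Monotonicity alone gives stability, not minimality, and there is no improvement-of-flatness theorem under stability that you can invoke here. This is why the classical De Giorgi conjecture itself remains open for $4\le d\le 8$ without \eqref{limit}, and why a blow-down approach cannot yield Theorem~\ref{longrange7} as stated. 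A correct proof along the lines of the present paper for $d=2$ uses the paper's Theorem~\ref{stable2d} directly (monotone $\Rightarrow$ stable $\Rightarrow$ 1-D); for $d=3$ and $d=4,\,s=1/2$ one must either reproduce the cited Poincar\'e/Liouville arguments or combine a classification of stable solutions in $\mathbb{R}^{d-1}$ with the reduction in Remark~\ref{connection}.
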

Theorem \ref{longrange7} is due to \cite{CSM05} when $d=2,\, s=1/2$, \cite{CY15} and \cite{SV09} when $d=2, \, 0<s<1$, \cite{CC10} when $d=3, \, s=1/2$, \cite{CC14} when $d=3, \, 1/2<s<1$, \cite{DFV18} when $d=3, \, 0<s<1/2$ and \cite{FS17} when $d=4,\, s=1/2$. 

Concerning the nonlocal De Giorgi Conjecture in higher dimensions with the additional limit condition \eqref{limit} or with minimality condition, the best known results are the following two theorems, which were proved in \cite{SAV16} when $s\in (1/2,1)$, \cite{SAV18} when $s=1/2$ and \cite{DSV16} when $s \in (1/2-\epsilon_0, 1]$. 

\begin{theorem}
\label{opennonlocalbothcondition}
Let $d \le 8$. Then, there exists $\epsilon_0 \in (0,1/2]$ such that for any $s \in (1/2-\epsilon_0, 1]$, the following statement holds true:

Let $u$ be an entire solution to \eqref{AC1} satisfying \eqref{limit} and \eqref{monotonecondition} , then $u$ is $1$-D. 
\end{theorem}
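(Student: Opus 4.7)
The plan is to follow the classical Savin-type blow-down scheme and reduce to the classification of entire minimal graphs in low dimension. First I would upgrade $u$ to a global minimizer of $\mathcal{J}(\cdot,\mathbb{R}^d)$: under the monotonicity \eqref{monotonecondition} and the limits \eqref{limit}, the vertical translates $u^t(x):=u(x+t e_d)$ foliate the range $(-1,1)$, so a sliding/comparison argument shows that $u$ minimizes $\mathcal{J}$ against any compactly supported perturbation. From minimality one then obtains the surface-type energy bound $\mathcal{J}(u,B_R)\le C R^{d-1}$, which is the crucial scale-invariant input.

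Next I would perform a blow-down. Setting $u_\epsilon(x):=u(x/\epsilon)$, each $u_\epsilon$ minimizes the rescaled energy $\mathcal{J}_{s,\epsilon}$, and the linear energy bound transfers to $\sup_{0<\epsilon<1}\mathcal{J}_{s,\epsilon}(u_\epsilon,B_R)<\infty$ for every fixed $R$. By the Savin--Valdinoci $\Gamma$-convergence result recalled in the excerpt, one can extract $\epsilon_k\to 0$ with $u_{\epsilon_k}\to \chi_E-\chi_{E^c}$ in $L^1_{\loc}$, where $E$ is a classical perimeter minimizer if $s\in[1/2,1]$ and a fractional $s$-perimeter minimizer if $s\in(0,1/2)$. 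Because $u$ is monotone in $x_d$, monotonicity passes to the limit and $E$ is a subgraph over $\mathbb{R}^{d-1}$ in the $x_d$-direction.

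I would then invoke the classification of minimal graphs in $\mathbb{R}^d$ for $d\le 8$. For $s=1$, Bombieri--De Giorgi--Giusti assert that entire minimal graphs are halfspaces, and the same statement applies to $E$ whenever $s\in[1/2,1)$ since $E$ minimizes the classical perimeter. For $s\in(1/2-\epsilon_0,1/2)$ with $\epsilon_0$ sufficiently small, the Cafarelli--Valdinoci, Figalli--Valdinoci, and Savin--Valdinoci classification of $s$-minimal graphs recalled in the excerpt forces $E$ to be a halfspace in the same dimension range. In all cases $E$ is a halfspace $\{x\cdot\nu\le c\}$, whose normal must satisfy $\nu_d\ge 0$ because of the monotone ordering.

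Finally, since $u_{\epsilon_k}\to \chi_E-\chi_{E^c}$ with $E$ a halfspace, the rescaled nodal sets $\epsilon\{u=0\}$ become arbitrarily flat on arbitrarily large balls as $\epsilon\to 0$. Feeding this flatness input into Savin's improvement-of-flatness theorem for \eqref{AC1} would then force $u$ to depend on a single direction, i.e.\ to be $1$-D. The main obstacle is the classification step: the restriction to $s\in(1/2-\epsilon_0,1]$ arises precisely from the absence of a general halfspace classification for $s$-minimal graphs outside the perturbative regime $s\to 1/2^-$, together with the need for the improvement-of-flatness estimates to hold with constants that remain uniform as $s$ crosses the threshold $s=1/2$ and as $s\to 1^-$.
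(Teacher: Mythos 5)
The paper does not prove Theorem~\ref{opennonlocalbothcondition}; it is quoted as a known result and attributed to Savin \cite{SAV16} (for $s\in(1/2,1)$), Savin \cite{SAV18} (for $s=1/2$), and Dipierro--Serra--Valdinoci \cite{DSV16} (for $s\in(1/2-\epsilon_0,1]$). So there is no ``paper's own proof'' for you to be measured against; your task was effectively to reconstruct the arguments of those references.

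Your sketch does capture the right overall strategy, and most of the preliminary reductions are correct: the upgrade from monotone-plus-limits to global minimality by the sliding/foliation argument, the rescaling and $\Gamma$-convergence to a (fractional) perimeter minimizer $E$ from \cite{SV12}, the observation that $E$ is a subgraph by monotonicity, and the classification of $E$ as a halfspace via Bombieri--De Giorgi--Giusti (for $s\geq 1/2$) or the nonlocal classification recalled in the paper (for $s$ close to $1/2$ from below). However, there are two points that need to be flagged. First, your scale-invariant bound $\mathcal{J}(u,B_R)\leq CR^{d-1}$ is not correct uniformly in $s$: the sharp growth for minimizers is $R^{d-1}$ only for $s\in(1/2,1)$, becomes $R^{d-1}\log R$ at $s=1/2$, and $R^{d-2s}$ for $s\in(0,1/2)$; the relevant statement is that $\sup_\epsilon\mathcal{J}_{s,\epsilon}(u_\epsilon,B_1)<\infty$ with the correct $s$-dependent normalization used in the $\Gamma$-convergence. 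Second, and more seriously, the step ``feed the flatness input into Savin's improvement of flatness'' is not a small quotation of a black box --- it is the main technical content of \cite{SAV16} and \cite{SAV18}. The argument is not simply that the blow-down is flat; one must establish density estimates for level sets, convert $L^1_{\mathrm{loc}}$ convergence of $u_\epsilon$ into uniform flatness of $\{u=0\}$ at all large scales, prove an improvement-of-flatness lemma for solutions of \eqref{AC1} (a Harnack-type oscillation decay at the level of the level set), and iterate it to rule out any deviation from $1$-D at finite scales. None of this is sketched. As it stands your proposal is a plausible outline of the Savin scheme, which is appropriate given that the theorem is stated in the paper without proof, but it should not be presented as a self-contained proof: the improvement-of-flatness lemma and the rigidity iteration must either be proved or cited explicitly from \cite{SAV16}, \cite{SAV18}, \cite{DSV16}.
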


\begin{theorem}
\label{minimizer}
Let $d \le 7$. Then, there exists $\epsilon_0 \in (0,1/2]$ such that for any $s \in (1/2-\epsilon_0, 1]$, the following statement holds true:

Let $u$ be an entire solution to \eqref{AC1} which is a minimizer of $\mathcal{J}(\cdot, \mathbb{R}^d)$, then $u$ is $1$-D. 
\end{theorem}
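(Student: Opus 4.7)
The plan is to follow the blowdown strategy of Savin \cite{Savin} in the classical case, adapted to the fractional setting via the $\Gamma$-convergence framework of Savin-Valdinoci \cite{SV12}. The first step is to use the minimality of $u$ to establish a Modica-type energy bound of the form
\begin{align*}
    \mathcal{J}(u, B_R) \le \begin{cases} C R^{d-1}, & s \in (1/2, 1], \\ C R^{d-1} |\log R|, & s = 1/2, \\ C R^{d-2s}, & s \in (1/2 - \epsilon_0, 1/2), \end{cases}
\end{align*}
obtained by testing minimality against a competitor that coincides with a fixed one-dimensional layer $v_{0}(x \cdot e)$ on a thin slab intersecting $B_R$ and with $\pm 1$ elsewhere (the nonlocal tail must be computed carefully when $s \le 1/2$). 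The exponents match those of the rescaled functional $\mathcal{J}_{s,\epsilon}$, so the blow-downs $u^{R}(x) := u(Rx)$ satisfy $\mathcal{J}_{s,1/R}(u^{R}, B_1) \le C(d, s)$ uniformly in $R$.

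Next, by the $\Gamma$-convergence theorem of \cite{SV12}, along a subsequence $R_k \to \infty$ one has $u^{R_k} \to \chi_E - \chi_{E^c}$ in $L^1_\loc(\R^d)$, where $E \subset \R^d$ is a global minimizer of the classical perimeter when $s \in [1/2, 1]$ and of the $s$-perimeter when $s \in (1/2 - \epsilon_0, 1/2)$. A further blowdown of $E$ yields a minimizing cone $K$. Since $d \le 7$, Simons' theorem classifies every classical minimizing cone as a hyperplane, and by the Caffarelli-Valdinoci-Savin classification recalled in the introduction, the same conclusion holds for minimizing $s$-cones provided $|s-1/2| < \epsilon_0$. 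Therefore $K$, and hence $E$, is a half-space, so the nodal region of $u$ is trapped in slabs whose width-to-radius ratio vanishes at infinity.

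The final step is to feed this asymptotic flatness into the improvement-of-flatness theorem for the fractional Allen-Cahn equation (\cite{Savin} for $s=1$, and \cite{SAV16, SAV18, DSV16} for $s$ near $1/2$): a minimizer whose zero level set lies in a sufficiently thin slab at scale $R$ has its zero level set lying in a proportionally thinner slab at scale $R/2$, after a small rotation of the normal direction. Iterating the dyadic gain, the rotations converge to a single direction $e \in \Sph^{d-1}$, all level sets of $u$ align orthogonally to $e$, and $u(x) = g(x \cdot e)$ for some one-dimensional profile $g$, giving the claimed symmetry.

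The main obstacle is the improvement-of-flatness step, which rests on a delicate compactness-and-contradiction argument coupled with a boundary Harnack inequality for the linearization of the fractional Allen-Cahn operator around a one-dimensional layer solution. The threshold $\epsilon_0$ in the statement is inherited entirely from the analogous threshold in the cone classification: any widening of the range of $s$ for which minimizing $s$-cones in $d \le 7$ are hyperplanes would widen the validity of this theorem accordingly. Replacing minimality by mere stability would destroy the blowdown-and-cone-classification step and would require the quantitative stability inequalities developed in the present paper.
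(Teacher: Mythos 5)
Theorem \ref{minimizer} is not proved in this paper: it is stated as a known result and attributed to \cite{SAV16} (for $s\in(1/2,1)$), \cite{SAV18} (for $s=1/2$), and \cite{DSV16} (for $s$ slightly below $1/2$). Its only role in the body of the paper is as a black box invoked in the last line of the Appendix proof of Remark \ref{connection}, after that proof has reduced the monotone case to the minimizer case. There is therefore no internal argument to compare your attempt against.

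Taken as a sketch of the proofs in those external references, your outline is essentially correct: minimality gives Modica-type energy growth with the scaling exponents appropriate to each range of $s$; the $\Gamma$-convergence of \cite{SV12} sends the blowdowns $u(Rx)$ to $\chi_E-\chi_{E^c}$ with $E$ a perimeter (resp.\ $s$-perimeter) minimizer; the cone classification in $d\le 7$, valid on the fractional side only for $s$ near $1/2$, makes the further blowdown a half-space; and an improvement-of-flatness iteration then promotes this asymptotic flatness to one-dimensionality. You correctly identify improvement of flatness as the technical crux, and correctly source the threshold $\epsilon_0$ in the cone-classification step. One imprecision worth noting: the dyadic improvement-of-flatness iteration, as you state it, produces $C^{1,\alpha}$ (and ultimately exact) flatness of the zero level set, but passing from flatness of a single level set to the full statement $u(x)=g(x\cdot e)$ requires an additional step (e.g.\ monotonicity of $u$ in the normal direction together with a sliding or uniqueness-of-the-1-D-profile argument), which you elide. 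More fundamentally, the hard analysis — the improvement-of-flatness lemma itself, together with the compactness and boundary Harnack machinery that drives it, which occupies the bulk of \cite{SAV16}, \cite{SAV18}, and \cite{DSV16} — is acknowledged but not executed, so what you have written is a faithful program for the proof rather than a proof, which is the same posture the paper itself adopts toward this theorem.
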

A counterexample for $d=9, \, 1/2<s<1$ is announced by H. Chan, J. D´avila, M. del Pino, Y. Liu and J. Wei, see the comments after \cite[Theorem 1.3]{CLW17}. The other cases remain open.

\hfill\\

Motivated by Conjecture \ref{monotone conjecture}, it is natural to study the stable De Giorgi Conjecture, that is, Conjecture \ref{stable conjecture}. This is because, on the one hand, it is well known that monotone solutions to \eqref{AC1} are stable solutions. On the other hand, a further relation between stable solutions and monotone solutions to \eqref{AC1} is given in the following remark:
\begin{remark}
\label{connection}
If any entire stable solution to \eqref{AC1} in $\mathbb{R}^{d-1}$ is $1$-D, then any monotone solution to \eqref{AC1}  in $\mathbb{R}^d$ is also $1$-D for $d \le 3, s\in (0,1)$ and for $4 \le d \le 7, \, s \in (1/2-\epsilon_0,1)$, where $\epsilon_0 \in (0,1/2]$ is some constant.
\end{remark}


Remark \ref{connection} is well known by experts, but we haven't seen a proof in the literature. We will prove Remark \ref{connection} in Appendix.


Because of the connection between monotone solutions and stable solutions as revealed in Remark \ref{connection}, it is important to study Conjecture \ref{stable conjecture}.

\subsection{Previous Results on Conjecture \ref{stable conjecture}}

For $d=2$, Conjecture \ref{stable conjecture} was validated by Cabr\'e and  Sol\'a-Morales in \cite{CSM05} for $s=1/2$, and by Cabr\'e and Sire in \cite{CY15} and by Sire and Valdinoci in \cite{SV09} for every fractional power $0<s<1$ with different proofs, all of which require Cafarelli-Silvestre extension \cite{CS07} and the stability of $s$-harmonic extension $U$ in $\mathbb{R}^d \times (0,\infty)$.
The stability condition used in these references is the following: 

\begin{remark}
In \cite{CSM05}, \cite{SV09} and \cite{CY15}, the stability of solution $u$ to \eqref{AC1} was understood in the sense that the second local variation of the extension energy
\begin{align*}
    \mathcal{E}(U;\mathbb{R}^{d+1}_+)=\frac{1}{2} \int_0^{\infty} \int_{\mathbb{R}^d} z^{1-2s}|\nabla U|^2 dx dt+\int_{\mathbb{R}^d}F(u(x)) dx
\end{align*} is nonnegative at $U$, where $U$ is the Cafarelli-Silvestre extension of $u$ which solves
\begin{align*}
\begin{cases}
    (i) \,\div (t^{1-2s} U(x,t))=0 \quad& \mbox{in $\mathbb{R}^d \times (0,\infty)$}\\
   (ii) \, c_s\lim_{t \rightarrow 0}t^{1-2s}\partial_t U(x,t)=f(U(x,0)) \quad & \mbox{on $\partial \mathbb{R}^{d+1}_+$}
    \end{cases}
\end{align*} with boundary condition $U(x,0)=u(x)$, where $c_s$ is a constant which is discussed in 
\cite[Remark 3.11]{CY14}. It appears that this stable assumption is stronger than ours which just considers local variations on $\mathbb{R}^d$ instead of $\mathbb{R}^d \times (0,\infty)$. Later it was shown in \cite[Proposition 2.3]{DFV18} that the two stable definitions are equivalent for every fractional power $0<s<1$.
\end{remark}

For $3 \le d \le 8$ and $0<s<1$, Conjecture \ref{stable conjecture} remains open except for the case $d=3$ and $s=1/2$. In fact, it has been recently validated by Figalli and Serra in \cite{FS17} without using extension results in \cite{CS07}. Figalli and Serra utilized the local BV estimates scheme originally developed by Cinti, Serra and Valdinoci in \cite{CSE} for stable sets (see Definition 1.6 there), together with the following sharp interpolation inequality \begin{align}
\label{fsstep2}
    \mathcal{J}^{1/2}(u,B_1) \le C(d)\log L_0\left(1+\int_{B_2} |\nabla u| dx\right),
\end{align}where $L_0\ge 2$ is an upper bound for $\Vert \nabla u\Vert _{\infty}$, to prove the following energy estimates in any dimension $d$ and $s=1/2$, which is a key ingredient to validate Conjecture \ref{stable conjecture}.
\begin{proposition}(\cite[Proposition 1.7]{FS17})
\label{1.7}
If $u$ is a stable solution to \eqref{AC1}, then \begin{align}
\label{fse1}
    \int_{B_R}|\nabla u| \le CR^{d-1}\log (M_0R)
\end{align}
and
\begin{align}
\label{fse2}
    \mathcal{J}^{1/2}(u,B_R) \le CR^{d-1} \log^2(M_0R)
\end{align}
where $C$ is a universal constant depending only on $d$ and $\alpha$, and $M_0 \ge 2$ is an upper bound for the H\"older norm of $f$.
\end{proposition}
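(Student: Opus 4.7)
The plan is to establish the BV estimate \eqref{fse1} via a Cinti--Serra--Valdinoci-style stability argument, and then deduce the energy estimate \eqref{fse2} by feeding \eqref{fse1} into the sharp interpolation inequality \eqref{fsstep2}. As a preliminary observation, the $C^{1,\alpha}$ regularity of $f$ combined with fractional elliptic regularity (see e.g.\ \cite{CY14}) yields $\|\nabla u\|_{L^\infty(\mathbb{R}^d)} \le C M_0$, so one may take $L_0 = C M_0$ in every invocation of \eqref{fsstep2}.

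For \eqref{fse1}, I would differentiate the equation to obtain the linearized identity $(-\Delta)^{1/2}(\partial_e u) = f'(u)\,\partial_e u$ for every unit direction $e$. Setting $c = \partial_e u$ and testing the stability inequality against $v = \eta c$ while simultaneously testing the linearized identity against $\eta^2 c$, then subtracting the two resulting relations and using the algebraic identity
\begin{equation*}
 (\eta(x)c(x)-\eta(y)c(y))^2 - (\eta^2(x)c(x)-\eta^2(y)c(y))(c(x)-c(y)) = c(x)c(y)(\eta(x)-\eta(y))^2,
\end{equation*}
produces the nonlocal Sternberg--Zumbrun inequality (in the spirit of \cite{CSE})
\begin{equation*}
 \iint_{\mathbb{R}^d \times \mathbb{R}^d} \partial_e u(x)\,\partial_e u(y)\,\frac{(\eta(x)-\eta(y))^2}{|x-y|^{d+1}}\,dx\,dy \ge 0 \qquad \forall\, \eta \in C^1_c(\mathbb{R}^d).
\end{equation*}
From here I would adapt the CSV local BV scheme: apply the inequality to logarithmically graded cutoffs $\eta$ supported in $B_{2R}$ and equal to $1$ on $B_R$, then pass from level-set information to $u$ via the coarea formula (the superlevel sets $\{u > t\}$ behave like nearly $s$-stable sets for each $t\in(-1,1)$). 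Balancing the near-diagonal region $\{|x-y| \lesssim L_0^{-1}\}$, controlled by $\|\nabla u\|_\infty \le L_0$, against the far-field contribution through a dyadic decomposition yields
\begin{equation*}
 \int_{B_R} |\nabla u| \le C R^{d-1}\log(M_0 R),
\end{equation*}
the single logarithmic factor being dictated by the borderline capacity of the half-Laplacian.

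Once \eqref{fse1} is in hand, \eqref{fse2} follows by applying \eqref{fsstep2} at scale $R$: setting $v(y) = u(Ry)$ gives $\|\nabla v\|_\infty \le R L_0$, and after a change of variables one obtains
\begin{equation*}
 \mathcal{J}^{1/2}(u, B_R) \le C \log(M_0 R)\Big(R^{d-1} + \int_{B_{2R}} |\nabla u|\Big) \le C R^{d-1} \log^2(M_0 R),
\end{equation*}
where the last step invokes \eqref{fse1} at radius $2R$. The main obstacle is the passage from the Sternberg--Zumbrun inequality above (an $L^2$-flavoured object that is not sign-definite in $\partial_e u$) to the $L^1$-type BV bound \eqref{fse1}; extracting exactly one logarithmic factor rather than a power of $M_0 R$ is the technical heart of the argument and is where the criticality of $s=1/2$ manifests itself.
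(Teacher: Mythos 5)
Your second step is correct and matches the paper exactly: once \eqref{fse1} is known, one applies the interpolation inequality \eqref{fsstep2} to the rescaled function $u(R\cdot)$, uses the scaling of $\mathcal{J}^{1/2}$ and of the total variation, and then feeds in \eqref{fse1} at radius $2R$ to obtain $\mathcal{J}^{1/2}(u,B_R) \le C R^{d-1}\log^2(M_0 R)$. The elliptic bound $\|\nabla u\|_\infty\le CM_0$ (from \cite[Proposition 5.2]{CS17}) is also used in the paper to control $L_0$.

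Your first step, however, is not the paper's argument and it has a genuine gap that you yourself flag. You propose to derive a nonlocal Sternberg--Zumbrun inequality
\[
\iint \partial_e u(x)\,\partial_e u(y)\,\frac{(\eta(x)-\eta(y))^2}{|x-y|^{d+1}}\,dx\,dy \ge 0
\]
by the usual $\eta c$ versus $\eta^2 c$ comparison, and then extract \eqref{fse1} via ``logarithmically graded cutoffs'' and a coarea/level-set argument. This inequality involves the non-sign-definite correlation $\partial_e u(x)\,\partial_e u(y)$ and, as in the classical case, it lives at the $L^2$ level; you give no mechanism to convert it into the $L^1$-type bound $\int_{B_R}|\nabla u|\lesssim R^{d-1}\log(M_0R)$. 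The level-set heuristic (``superlevel sets behave like nearly $s$-stable sets'') is not established anywhere for solutions to \eqref{AC1}, and the single-log conclusion is asserted rather than derived. In contrast, the paper (following Figalli--Serra, Lemmas~\ref{domainvariation}, \ref{identity}, \ref{2.2}) uses the domain-variation estimate $\Delta^t_{\mathbf{v}\mathbf{v}}\mathcal{J}(u,B_R)\le Ct^2\mathcal{J}^s(u,B_R)/R^2$ together with the submodularity identity and the min/max decomposition of $P_{t,\mathbf{v}}u$ versus $u$. This yields the \emph{product} estimate
\[
\Bigl(\int_{B_{1/2}}(\partial_\mathbf{v}u)_+\Bigr)\Bigl(\int_{B_{1/2}}(\partial_\mathbf{v}u)_-\Bigr)\le C\,\mathcal{J}^s(u,B_R)/R^2,
\]
which, combined with the divergence-theorem bound $|A^+_\mathbf{v}-A^-_\mathbf{v}|\le C$, gives $\int_{B_{1/2}}|\partial_\mathbf{v}u|\le C(1+\sqrt{\mathcal{J}^s(u,B_1)})$. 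This product structure is precisely what lets stability control an $L^1$ quantity, and it has no counterpart in your Sternberg--Zumbrun route. Finally, the paper closes the loop by interpolating $\mathcal{J}^s$ against $\int|\nabla u|$ via Lemma~\ref{embedding} and then invoking Simon's absorption lemma to iterate down to a clean bound; this iteration, and where exactly the logarithm enters (from Lemma~\ref{embedding}, not from a ``borderline capacity'' heuristic), are the essential missing pieces in your proposal.
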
With \eqref{fse1} and \eqref{fse2} being applied in the local BV estimate scheme, and by a bootstrap argument, Figalli and Serra were able to prove Conjecture \ref{stable conjecture} for $d=3$ and $s=1/2$.

\hfill\\

\subsection{Our Contribution in this Paper}
Proving energy estimates like \eqref{fse1} and \eqref{fse2} for stable solutions to \eqref{AC1} for every fractional power $s \in (0,1)$ is definitely a decisive step to solve Conjecture \ref{stable conjecture}. 

We have observed that actually suitable adaptation of the local BV estimate scheme used in \cite{FS17} together with a generalized form of \eqref{fsstep2} can produce energy estimates for stable solutions in arbitrary dimension $d$ and energy $0<s<1$. We prove: \begin{proposition}
\label{generalenergyestimates}
Let $u \in C^2(\mathbb{R}^d)$ be a stable solution to
\begin{align}
\label{AC}
    (-\Delta)^s u =f(u), \quad |u| \le 1 \quad  \mbox{in $\mathbb{R}^d$},
\end{align}then there exists constant $C_1=C_1(d,s)$ and $C_2=C_2(d,s,f)$ such that for any ball $B_R \subset \mathbb{R}^d, \, R\ge 1$, we have
\begin{align}
    \label{gle1}
\int_{B_R}|\nabla u| \le \begin{cases}
C_1 R^{d-1} \quad & 0<s<\frac{1}{2}\\
C_2 R^{d+2s-2}\log(M_0R) \quad &\frac{1}{2}\le s<1
\end{cases}
\end{align}
and 
\begin{align}
    \label{gle2}
 \mathcal{J}^{s}(u,B_R) \le \begin{cases}
 C_1 R^{d-2s} \quad &0<s<\frac{1}{2}\\
 C_2 R^{d+2s-2}\log^2(M_0R) \quad &\frac{1}{2}\le s<1,
 \end{cases}
\end{align}where $M_0 \ge 2$ is an upper bound for $L^{\infty}$ norm of $f$. 
\end{proposition}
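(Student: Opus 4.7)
The plan is to adapt the two-step scheme of Figalli-Serra \cite{FS17} (originally for $s=1/2$) uniformly to all fractional powers $s\in(0,1)$, using as the main substitute for the sharp logarithmic interpolation inequality \eqref{fsstep2} a generalized version valid for every $s$.

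\textbf{Step 1: BV bound from stability.} I would first differentiate \eqref{AC} to obtain the linearized equation $(-\Delta)^s\partial_e u = f'(u)\,\partial_e u$ for every unit vector $e$, and test the stability inequality
\[
\int_{\R^d}\bigl[(-\Delta)^{s/2}v\bigr]^2 \ge \int_{\R^d} f'(u)\,v^2
\]
against $v=\eta\,\partial_e u$ with $\eta$ a cutoff satisfying $\eta\equiv 1$ on $B_R$ and $\supp\eta\subset B_{2R}$. Using the linearized equation together with the pointwise identity
\[
\bigl[\eta(x)a-\eta(y)b\bigr]^2-(a-b)\bigl[\eta(x)^2 a-\eta(y)^2 b\bigr]=ab\,\bigl[\eta(x)-\eta(y)\bigr]^2,
\]
with $a=\partial_e u(x)$, $b=\partial_e u(y)$, stability reduces to the Caccioppoli-type identity
\[
\int\!\!\int \partial_e u(x)\,\partial_e u(y)\,\frac{[\eta(x)-\eta(y)]^2}{|x-y|^{d+2s}}\,dx\,dy \ge 0.
\]
The right-hand integrand is sign-indefinite, so to turn this into a bona fide BV estimate I would descend to the level set structure, as in Cinti-Serra-Valdinoci \cite{CSE}: for a.e.\ $t\in(-1,1)$ one uses the analogous identity for the superlevel set $\{u>t\}$, obtains an isoperimetric-type $s$-perimeter bound, and integrates in $t$ via the coarea formula to recover $\int_{B_R}|\nabla u|$. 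Scaling of the cutoff yields $R^{d-1}$ when $0<s<1/2$ (the subcritical regime of \cite{CSE}), while the borderline scaling forces a logarithmic loss $R^{d+2s-2}\log(M_0 R)$ in the critical and supercritical regime $1/2\le s<1$.

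\textbf{Step 2: Interpolation inequality.} I would then prove the following generalization of \eqref{fsstep2}:
\[
\mathcal J^s(u,B_R) \le C(d,s)\Bigl[R^{d-2s}+R^{2s-1}\,\Phi_s(R)\,\|\nabla u\|_{L^1(B_{2R})}\Bigr],
\]
with $\Phi_s\equiv 1$ if $s\ne 1/2$ and $\Phi_{1/2}(R)=\log(M_0 R)$. The proof splits the Gagliardo kernel into the near-diagonal region $|x-y|\le\delta$ and its complement: on the near-diagonal piece use $|u(x)-u(y)|\le\|\nabla u\|_{\infty}|x-y|$, on the complement use $|u|\le 1$, and then optimize $\delta$ in terms of $\|\nabla u\|_{\infty}$. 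The optimization is balanced at the borderline $s=1/2$, which is precisely where the logarithm has to appear.

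\textbf{Combining, and main obstacle.} Plugging the BV bound from Step 1 into the interpolation inequality of Step 2 yields \eqref{gle2} with the claimed exponents; the extra logarithm in the energy estimate for $1/2\le s<1$ comes from the composition of the two logs. The decisive and most delicate step is clearly the first one, because the Caccioppoli identity above has a non-sign-definite right-hand side, and one has to carefully exploit the geometric structure of the transition layer of $u$ via a coarea decomposition and the stability of individual super-level sets, mirroring the strategy of \cite{CSE} for stable nonlocal minimal sets but adapted to a double-well potential. Handling the critical and supercritical scaling $s\ge 1/2$, which does not arise in \cite{CSE} (where $s<1/2$ is the natural regime), is where the logarithmic factors enter and constitutes the main new technical content of the proof.
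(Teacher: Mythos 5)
Your Step 2 is in the same spirit as the paper's Lemma~\ref{embedding}/Lemma~\ref{betterestimate}: split the Gagliardo kernel at a scale tied to the Lipschitz bound and optimize. (One small discrepancy: you assert the logarithm only appears at $s=1/2$; the paper's version carries a $\log(L_0 R)$ factor for \emph{all} $s\in[1/2,1)$, with coefficient $O(1-s)$, and this is what ultimately propagates into \eqref{gle1}--\eqref{gle2}.) The place where your proposal breaks down is Step 1. Testing the stability quadratic form against $v=\eta\,\partial_e u$ does yield the Caccioppoli-type inequality
\begin{align*}
\int\!\!\int \partial_e u(x)\,\partial_e u(y)\,\frac{[\eta(x)-\eta(y)]^2}{|x-y|^{d+2s}}\,dx\,dy \ge 0,
\end{align*}
and you correctly note that the integrand is sign-indefinite. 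But the fix you propose — ``descend to the level sets $\{u>t\}$ and apply the \cite{CSE} identity'' — does not go through: the superlevel sets of a stable Allen-Cahn solution are \emph{not} stable $s$-minimal sets, so there is no analogous stability inequality for $\chi_{\{u>t\}}$, and coarea does not transfer one. This is not a technical detail but the crux of the argument, and your sketch has no workable substitute for it.

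The paper follows \cite{FS17} and avoids the $\eta\,\partial_e u$ test function entirely. It uses \emph{inner domain variations} $P_{t,\mathbf v}u(x)=u(\psi_{t,\mathbf v}^{-1}(x))$, where $\psi_{t,\mathbf v}(x)=x+t\phi(x)\mathbf v$ with $\phi$ a cutoff, so that $P_{t,\mathbf v}u=u(\cdot-t\mathbf v)$ on $B_{1/2}$. Lemma~\ref{domainvariation} bounds the second variation $\Delta^t_{\mathbf v\mathbf v}\mathcal J^s(u,B_R)\le Ct^2\mathcal J^s(u,B_R)/R^2$, and the submodularity identity of Lemma~\ref{identity} applied to the pair $u$ and $P_{t,\mathbf v}u$ produces, after dividing by $t^2$ and passing to the limit, the \emph{sign-definite} interaction term
\begin{align*}
\left(\int_{B_{1/2}}(\partial_{\mathbf v}u)_+\right)\!\left(\int_{B_{1/2}}(\partial_{\mathbf v}u)_-\right)\le C\,\frac{\mathcal J^s(u,B_R)}{R^2}.
\end{align*}
Because $|A^+_{\mathbf v}-A^-_{\mathbf v}|\le C$ by the divergence theorem (using only $|u|\le 1$), this yields $\int_{B_{1/2}}|\nabla u|\le C\bigl(1+\sqrt{\mathcal J^s(u,B_1)}\bigr)$, which is then combined with Lemma~\ref{embedding} and Simon's absorption lemma to close the iteration. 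That translation-plus-submodularity mechanism (rather than a linearized-equation Caccioppoli) is what makes Step 1 work; your proposal as written is missing it, and the level-set detour you suggest would not supply it.
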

Note that it is easy to see that for a bounded Lipschitz function $u$, the natural growth for fractional energy is
\begin{align*}
    \mathcal{J}^s(u,B_R) \le CR^d,
\end{align*}see for example Lemma \ref{embedding} below. Such estimate is too rough. It is with the stability condition of $u$ that we can derive a sharper fractional energy growth estimate \eqref{gle2} than the natural one.

\eqref{gle1} and \eqref{gle2} are sharp for the case $0<s<1/2$, in the sense that the local minimizers do satisfy same estimates, which are optimal, see \cite{SV14} and \cite{PSV}. Although for the case $1/2\le s<1$, our energy estimates are not optimal, the adaptation of local BV estimates scheme in \cite{CSE} and \cite{FS17} together with our energy estimates can also give a different proof to validate Conjecture \ref{stable conjecture} for the case $d=2, \, 0<s<1$, see Theorem \ref{stable2d}.

We remark that when $s=1/2$, $C_2$ does not depend on $f$ by keeping track of the constant in our proof. Thus in this case, the second inequalities in \eqref{gle1} and \eqref{gle2} coincide with \eqref{fse1} and \eqref{fse2} in Proposition \ref{1.7}. 

We also remark that the key of proving \eqref{fsstep2} is by \cite[Lemma 2.1]{FJ14} (or \cite[Theorem 2.4]{FS17}), whose proof was based on by Plancherel formula plus some delicate estimates. The proof seems to work only for the case $s=1/2$. We give a different proof in this paper that actually works for all cases $1/2 \le s<1$. In fact, we can prove the following result, which might have independent interest. 
\begin{lemma}
\label{embedding}
For any ball $B_R \subset \mathbb{R}^d$ and $u$ which belongs to appropriate space with $|u|\le 1$, and let $s \in (0,1/2)$, there exists universal constant $C=C(d,s)>0$ such that for any $R \ge 1$,
\begin{align}
\label{lessthan1/2}
    \mathcal{J}^s(u,B_{R}) \le C\left( \int_{B_{2R}}|\nabla u|dx+R^{d-2s}+R^d\right).
\end{align}
If $1/2 \le s <1$ and $u$ is assumed to be a Lipschitz function with $\Vert \nabla u \Vert _{L^{\infty}(B_R)} \le L_0, \, L_0 \ge 2$, then there exists $C=C(d,s)>0$ such that
\begin{align}
\label{biggerthan1/2}
    \mathcal{J}^s(u,B_{R}) \le C\left( R^{d-2s}+L_0^{2s-1}\log (2L_0R)\int_{B_{2R}} |\nabla u| \right).
\end{align}
\end{lemma}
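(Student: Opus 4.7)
The plan is to decompose the domain of integration for $\mathcal{J}^s(u,B_R)$ by the size of $|x-y|$, applying the crude pointwise bound $|u(x)-u(y)|\le 2$ (which uses $|u|\le 1$) when the two points are far, and the segment representation
\[
u(x)-u(y)=-\int_0^1 \nabla u((1-t)x+ty)\cdot(y-x)\,dt
\]
when they are near. Every occurrence of the latter is processed by the change of variable $w=(1-t)x+ty$ at fixed $x$ and $t$ (Jacobian $t^d$); Fubini then turns the double singular integral into $\int|\nabla u(w)|\,\bigl(\int_{\text{ball in }x}|w-x|^{-\sigma}\,dx\bigr)\,dw$ times an explicit power of $t$, and the only analytic input is whether the resulting exponents are integrable.

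For $0<s<1/2$, I would split pairs $(x,y)$ with at least one endpoint in $B_R$ into three regions: (i) $|x-y|>1$ with some endpoint outside $B_{2R}$, which produces $R^{d-2s}$ because $\int_{|y|>R}|y|^{-d-2s}\,dy\asymp R^{-2s}$; (ii) $|x-y|>1$ with both endpoints in $B_{2R}$, bounded trivially by $R^d$; and (iii) the diagonal piece $|x-y|\le 1$, where since $R\ge 1$ both endpoints automatically lie in $B_{2R}$. For (iii) I would combine $|u(x)-u(y)|^2\le 2|u(x)-u(y)|$ with the segment representation; the change of variable reduces the result to $\int_0^1 t^{2s-1}\,dt\cdot \int |\nabla u(w)|\int_{|w-x|\le t}|w-x|^{1-d-2s}\,dx\,dw$. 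The inner ball integral equals $c\, t^{1-2s}$ and the outer $\int_0^1 t^{2s-1}\cdot t^{1-2s}\,dt=1$, both converging because $2s<1$, yielding the desired $C\int_{B_{2R}}|\nabla u|$.

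For $1/2\le s<1$ the previous calculation fails (the $t$- or $|w-x|$-integral diverges) and the Lipschitz bound must enter. Again partition by $|x-y|$: the contribution of $|x-y|\ge R$ gives $R^{d-2s}$ as before; for $|x-y|\le 1/L_0$, write $|u(x)-u(y)|^2\le L_0|x-y|\cdot|u(x)-u(y)|$, lowering the $|x-y|$-exponent from $d+2s$ to $d+2s-1$, and run the same change-of-variable recipe to obtain a contribution of size $L_0\cdot L_0^{-(2-2s)}\int|\nabla u|=L_0^{2s-1}\int|\nabla u|$; here integrability is recovered because $2s-2<0$. The intermediate range $1/L_0<|x-y|<R$ is decomposed into dyadic annuli $r_k\le|x-y|<2r_k$ with $r_k=2^k/L_0$ and $k=0,\dots,K$ where $K\asymp\log_2(L_0R)$. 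On each annulus, $|u(x)-u(y)|^2\le 2|u(x)-u(y)|$ combined with the segment representation produces a term of order $r_k^{1-2s}\int_{B_{2R}}|\nabla u|$. When $s>1/2$, $2^{1-2s}<1$ and the geometric sum converges to $O(L_0^{2s-1})$; at the borderline $s=1/2$ each term is $O(1)$ and the total is $O(\log(L_0R))$. In both cases the sum is absorbed into $L_0^{2s-1}\log(2L_0R)\int_{B_{2R}}|\nabla u|$.

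The main technical obstacle is the $1/2\le s<1$ case: one must keep track of the scale constraint $|w-x|\le t|x-y|$ produced by the change of variables so that all inner integrals remain finite on the correct scale, and count the dyadic annuli carefully so that the $\log(2L_0R)$ factor emerges exactly at $s=1/2$ while still providing a valid (if lossy) upper bound for $s>1/2$. By contrast, the $0<s<1/2$ estimate is essentially the standard Gagliardo embedding $W^{1,1}(B_{2R})\hookrightarrow W^{2s,1}(B_{2R})$ combined with the $|u|\le 1$ truncation on the non-diagonal tail.
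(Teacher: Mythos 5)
Your proposal is correct, and the argument is organized differently from the paper's.  For $0<s<1/2$ the paper cites the fractional Sobolev embedding $W^{1,1}(B_{2R})\hookrightarrow W^{2s,1}(B_{2R})$ (Proposition \ref{prop2.2}) as a black box after reducing $|u(x)-u(y)|^2\le 2|u(x)-u(y)|$, whereas you re-derive that embedding directly via the segment representation and a change of variables; these are the same mechanism, with yours being self-contained.  The real divergence is in the $1/2\le s<1$ case.  The paper first establishes the unit-ball estimate of Lemma \ref{betterestimate} by a layer-cake decomposition of $|u(x+z)-u(x)|^{2-2s}$ over levels $t$: the Lipschitz bound then forces $|z|>t^{1/(2-2s)}/L_0$, cutting off the small-$|z|$ singularity, while the fundamental theorem of calculus handles the remaining first power of $|u(x+z)-u(x)|$; the $L_0^{2s-1}\log L_0$ factor then drops out of a single $(t,z)$-integral, and the $B_R$ result is recovered by scaling.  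You instead decompose directly in $|x-y|$ into three regimes and keep the whole computation on $B_R$: the tail $|x-y|\ge R$ gives $R^{d-2s}$ from $|u|\le 1$; the diffusive scale $|x-y|\le 1/L_0$ uses the Lipschitz bound once to lower the kernel exponent to $d+2s-1$ and yields $L_0^{2s-1}\int|\nabla u|$; and the $\asymp\log_2(L_0R)$ dyadic annuli between $1/L_0$ and $R$ each contribute $O(r_k^{1-2s})\int|\nabla u|$, giving a convergent geometric sum of order $L_0^{2s-1}$ for $s>1/2$ and a logarithm at the critical exponent $s=1/2$.  Your route makes the origin of the $\log(2L_0R)$ factor and the role of the scale $1/L_0$ more transparent, at the cost of an explicit dyadic bookkeeping; the paper's layer-cake device packages the same cutoff implicitly and is slightly shorter.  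One small point that affects both your argument and the paper's as stated: although the hypothesis only assumes $\Vert\nabla u\Vert_{L^\infty(B_R)}\le L_0$, the Lipschitz bound is actually invoked for pairs $x,y\in B_{2R}$, so one should really take $L_0$ to control $\Vert\nabla u\Vert_{L^\infty(B_{2R})}$; this is harmless in the application (Proposition \ref{generalenergyestimates}) where the gradient is bounded globally by the elliptic estimate, but it is worth flagging if the lemma is to be quoted independently.
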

Note that when $R=1$ and $s=1/2$, \eqref{biggerthan1/2} is exactly \eqref{fsstep2}.

It is with Lemma \ref{embedding} and the adaptation of local BV estimate for arbitary fractional powers $s \in (0,1)$, we can prove Proposition \ref{generalenergyestimates}.

\hfill\\

\begin{remark}
Only after this work was completed, we have noticed that Cinti has mentioned in her survey article \cite{Cinti} that she, Cabr\'e and Serra are carrying out a careful study on nonlocal stable phase transitions in \cite{CCJ}, which has not been posted yet. As Cinti mentioned, they will state energy estimates, density estimates, convergence of blow-down and some new classification results for stable solutions for fractional powers $0<s<1/2$. While our focus in this paper is to exploit the ideas in \cite{CSE} and \cite{FS17} to prove energy estimates for all fractional powers $0<s<1$, as best as we can do at this moment.
\end{remark}

\subsection{Outline of this paper}

In section 2 we prove Lemma \ref{embedding}. In section 3, we validate the BV estimate scheme for any fractonal power $s \in (0,1)$ and use it to prove Proposition \ref{generalenergyestimates}, and then as an application we validate Conjecture \ref{stable conjecture} for the case $d=2, \, s\in (0,1)$. In the appendix we prove Remark \ref{connection}.

\section{Proof of Lemma \ref{embedding}}
In this section we prove Lemma \ref{embedding}. We first recall the fractional Sobolev embedding theorem:
\begin{proposition}(see \cite[Proposition 2.2]{NPV})
\label{prop2.2}
For $s \in (0,1)$, $p \ge 1$ and $B_R \subset \mathbb{R}^d$, we have
\begin{align}
    \Vert u \Vert_{W^{s,p}(B_R)} \le C(d,p,s) \Vert u \Vert_{W^{1,p}({B_R})}
\end{align}
\end{proposition}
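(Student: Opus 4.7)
The $L^p(B_R)$ contribution to $\|u\|_{W^{s,p}(B_R)}$ is already dominated trivially by $\|u\|_{W^{1,p}(B_R)}$, so all the work is in controlling the Gagliardo seminorm
\[ [u]_{W^{s,p}(B_R)}^p = \int_{B_R}\int_{B_R} \frac{|u(x)-u(y)|^p}{|x-y|^{d+sp}}\,dx\,dy. \]
My plan is to split this double integral according to whether $|x-y|\le 1$ or $|x-y|>1$, and treat each piece with the tool natural to that regime; the two endpoint restrictions $s<1$ and $s>0$ will then appear, one in each piece, as convergence conditions for the relevant radial kernel.

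For the near-diagonal part $|x-y|\le 1$, I would exploit the convexity of the ball: for $x,y\in B_R$ the segment $\xi_t=(1-t)x+ty$ lies in $B_R$ for every $t\in[0,1]$, so the fundamental theorem of calculus and Jensen's inequality give
\[ |u(x)-u(y)|^p \le |x-y|^p\int_0^1|\nabla u(\xi_t)|^p\,dt. \]
Plugging this in, interchanging the $dt$ and $dx\,dy$ integrations by Fubini, and for each fixed $t$ performing the change of variables $\xi=\xi_t$ with $x$ held fixed (using the $t\leftrightarrow 1-t$ symmetry to reduce to $t\ge 1/2$), the Jacobian powers of $t$ cancel and the inner $x$-integral reduces to $\int_{\{|x-\xi|\le t\}\cap B_R}|x-\xi|^{p-d-sp}\,dx$. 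The radial form of this integral, $\int_0^t r^{p-sp-1}\,dr$, is finite precisely because $s<1$, and a direct computation yields a bound $C(d,p,s)\|\nabla u\|_{L^p(B_R)}^p$ without ever leaving $B_R$.

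For the far-diagonal part $|x-y|>1$, I would use the elementary estimate $|u(x)-u(y)|^p\le 2^{p-1}(|u(x)|^p+|u(y)|^p)$ together with Fubini to factor out the tail $\int_{|z|>1}|z|^{-d-sp}\,dz=\omega_{d-1}\int_1^\infty r^{-sp-1}\,dr$, which converges thanks to $s>0$. This contributes a bound $C(d,p,s)\|u\|_{L^p(B_R)}^p$, and summing the two pieces with the trivial $L^p$ control gives the inequality.

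The main technical point, rather than a serious obstacle, is keeping the near-diagonal change of variables entirely inside $B_R$: this is precisely where the convexity of the ball intervenes, and it makes the estimate purely interior, requiring no extension of $u$ outside $B_R$. Otherwise the argument is a direct adaptation of the standard proof in \cite{NPV}.
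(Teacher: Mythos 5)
Your argument is correct, and it takes a genuinely different route from the source the paper actually relies on. The paper gives no proof of this statement at all; it simply cites \cite[Proposition 2.2]{NPV}. In \cite{NPV}, the proposition is stated for a general $C^{0,1}$ domain $\Omega$ with bounded boundary, and the near-diagonal piece of the Gagliardo seminorm is handled by first invoking a Sobolev extension operator $E:W^{1,p}(\Omega)\to W^{1,p}(\mathbb{R}^d)$ and then moving along segments $[x,x+z]$ in the whole space. You avoid extension theory entirely by observing that $B_R$ is convex, so the segment from $x$ to $y$ stays inside $B_R$, which makes the fundamental-theorem-of-calculus bound
\[
|u(x)-u(y)|^p \le |x-y|^p\int_0^1 |\nabla u((1-t)x+ty)|^p\,dt
\]
purely interior. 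Your far-diagonal piece coincides with \cite{NPV}. What your approach buys is a more elementary, self-contained proof with no appeal to extension operators and a constant independent of $R$; what it costs is generality, since the convexity argument works for balls (or convex domains) but not arbitrary Lipschitz domains, which is exactly the limitation you acknowledge at the end. For the use the paper makes of the proposition, the convex case is all that is needed.

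One small imprecision worth tidying: after the change of variables $\xi=(1-t)x+ty$ (holding $x$ fixed), the Jacobian contributes $t^{-d}$ and the kernel contributes $t^{d+sp-p}$, giving a net factor $t^{sp-p}$; this does not cancel by itself. Either restrict to $t\ge 1/2$ by the $t\leftrightarrow 1-t$, $x\leftrightarrow y$ symmetry, in which case $t^{sp-p}\le 2^{p(1-s)}$ is simply bounded (no cancellation occurs or is needed), or keep the full $t$-integral and note that the subsequent $x$-integral produces a compensating factor $t^{p(1-s)}$, after which the $t$-powers cancel exactly. You describe a hybrid of these two; either one alone closes the argument cleanly.
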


In order to prove Lemma \ref{embedding}, we also need to prove:
\begin{lemma}
\label{betterestimate}
Assume $|u|\le 1$ and $\Vert \nabla u \Vert _{L^{\infty}(B_1)} \le L_0$, where $L_0 \ge 2$, then for $s \in [1/2,1)$, 
\begin{align}
    \int_{B_1}\int_{B_1}\frac{|u(x)-u(y)|^2}{|x-y|^{d+2s}}dx dy \le  &\frac{1}{1-s}d\omega_dL_0^{2s-1}\Big((2-2s)\log(2L_0)+1\Big)\int_{B_1}|\nabla u(x)| dx\\
    \label{6.5}
    =& C(d,s)L_0^{2s-1}\log(L_0)\int_{B_1}|\nabla u(x)| dx
\end{align}where $\omega_d$ is the volume of the unit ball in $\mathbb{R}^d$.
\end{lemma}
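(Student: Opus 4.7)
The plan is to interpolate between the $L^\infty$ bound $|u|\le 1$ and the Lipschitz bound $\|\nabla u\|_{L^\infty(B_1)}\le L_0$ via the pointwise inequality
\begin{equation*}
|u(x)-u(y)|^2 \le \min\bigl(2,\,L_0|x-y|\bigr)\cdot|u(x)-u(y)|,
\end{equation*}
and then collapse the double Gagliardo integral to a one-dimensional integral in the increment size by a change of variables. The remaining factor $|u(x)-u(y)|$ I would handle with the segment integral representation $|u(x)-u(y)|\le|x-y|\int_0^1|\nabla u(x+t(y-x))|\,dt$, which is legal because $B_1$ is convex, so the segment stays in $B_1$ and the $L^\infty$-gradient bound applies along it.

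Next, for each fixed $t\in[0,1]$, I would perform the change of variables $(x,y)\mapsto(z,h)$ with $z=(1-t)x+ty$ and $h=y-x$. This map has Jacobian $1$, sends $(B_1)^2$ into $B_1\times\{|h|<2\}$, and evaluates $\nabla u$ at the new base point $z$. After Fubini in $(t,z)$ and overestimating the $h$-range by $\{|h|<2\}$, the $t$-integral becomes trivial and the bound factorizes as
\begin{equation*}
\int_{B_1}\int_{B_1}\frac{|u(x)-u(y)|^2}{|x-y|^{d+2s}}\,dx\,dy \,\le\, \Bigl(\int_{B_1}|\nabla u(z)|\,dz\Bigr)\cdot d\omega_d\int_0^2\min(2,L_0 r)\,r^{-2s}\,dr.
\end{equation*}

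The remaining task is to estimate the one-dimensional integral by splitting at the breakpoint $r_0:=2/L_0\le 1$. The inner piece on $[0,r_0]$ equals $L_0\int_0^{r_0}r^{1-2s}\,dr=\frac{2^{2-2s}L_0^{2s-1}}{2-2s}$, which is exactly what generates the $\frac{1}{1-s}$ prefactor in the statement. The hard part will be the outer piece $2\int_{r_0}^{2}r^{-2s}\,dr$: at $s=\tfrac12$ it equals $2\log L_0$ directly, whereas for $s>\tfrac12$ it carries no logarithm, so a uniform-in-$s$ bound is not immediate. I would manufacture it by using $r^{1-2s}\le r_0^{1-2s}$ for $r\ge r_0$ (valid because $1-2s\le 0$), which gives $r^{-2s}\le r_0^{1-2s}r^{-1}$ on $[r_0,2]$ and therefore $\int_{r_0}^2 r^{-2s}\,dr\le 2^{1-2s}L_0^{2s-1}\log L_0$. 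Combining the two pieces and absorbing numerical constants (using $(2-2s)/(1-s)=2$) produces the claimed coefficient $\frac{1}{1-s}d\omega_d L_0^{2s-1}\bigl((2-2s)\log(2L_0)+1\bigr)$, and the second, simplified form follows since $\log(2L_0)\le 2\log L_0$ and $1\le \log L_0/\log 2$ under the standing assumption $L_0\ge 2$.
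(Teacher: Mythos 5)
Your proof is correct, and it takes a genuinely different route from the paper's. The paper opens with the three-factor split $|u(x)-u(y)|^2=|u(x)-u(y)|^{2-2s}\cdot|u(x)-u(y)|\cdot|u(x)-u(y)|^{2s-1}$, applies the layer-cake representation to the first factor, converts the resulting superlevel-set condition $|u(x+z)-u(x)|^{2-2s}>t$ into the geometric constraint $|z|>t^{1/(2-2s)}/L_0$ via the Lipschitz bound, and crushes the third factor by $(L_0|z|)^{2s-1}$; the $s\ge 1/2$ hypothesis enters there. You instead implement exactly the same competition between the $L^\infty$ and Lipschitz bounds directly at the pointwise level via $|u(x)-u(y)|^2\le\min\bigl(2,L_0|x-y|\bigr)|u(x)-u(y)|$, postponing all the $s$-dependence to the scalar integral $\int_0^2\min(2,L_0 r)\,r^{-2s}\,dr$, where the split at $r_0=2/L_0$ and the monotonicity inequality $r^{1-2s}\le r_0^{1-2s}$ (where $s\ge 1/2$ is used) yields the $L_0^{2s-1}\log L_0$ growth. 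Both proofs rely on the same closing device — the segment representation $|u(x)-u(y)|\le|x-y|\int_0^1|\nabla u(x+t(y-x))|\,dt$, justified by convexity of $B_1$, followed by the measure-preserving change of variables $(x,y)\mapsto(z,h)$ to free the factor $\int_{B_1}|\nabla u|$ — so the analytic core is shared. What your approach buys is transparency and a shorter computation: the layer-cake step and the bookkeeping of the truncation $2\wedge(2L_0)^{2-2s}$ in the paper are replaced by a single explicit one-dimensional integral, and the final constant you obtain (with $\log L_0$ rather than $\log(2L_0)$) is in fact slightly sharper, then dominated by the stated bound using $2^{2-2s}\le 2$ and $\tfrac{2-2s}{1-s}=2$. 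The verification that the stated coefficient reduces to $C(d,s)L_0^{2s-1}\log L_0$ using $L_0\ge 2$ is also handled correctly.
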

\begin{proof}
We estimate
\begin{align*}
     &\int_{B_1}\int_{B_1}\frac{|u(x)-u(y)|^2}{|x-y|^{d+2s}}dy dx\\
     = & \int_{B_1}\int_{B_2}\chi_{\{z:\, x+z \in B_1\}}|u(x+z)-u(x)|^{2-2s}\frac{|u(x+z)-u(x)|^{2s}}{|z|^{d+2s}}dz dx \\
    =& \int_{B_1}\int_0^2\int_{B_2}\chi_{\{z: \,|u(x+z)-u(x)|^{2-2s}>t, \, x+z \in B_1\}}\frac{|u(x+z)-u(x)|^{2s}}{|z|^{d+2s}}dzdt dx \\
    \le & \int_{B_1}\int_0^2\int_{B_2}\chi_{\{z: |z| > \frac{t^{\frac{1}{2-2s}}}{M_0}, \, x+z \in B_1\}}\frac{|u(x+z)-u(x)|^{2s}}{|z|^{d+2s}}dzdt dx \\
    = & \int_{B_1}\int_0^2\int_{B_2}\chi_{\{z: |z| > \frac{t^{\frac{1}{2-2s}}}{M_0}, \, x+z \in B_1\}}\frac{|u(x+z)-u(x)|}{|z|^{d+1}}\frac{|u(x+z)-u(x)|^{2s-1}}{|z|^{2s-1}}dzdt dx \\
    \le & M_0^{2s-1}\int_{B_1}\int_0^2\int_{B_2}\chi_{\{z: |z| > \frac{t^{\frac{1}{2-2s}}}{M_0}, \, x+z \in B_1\}} \frac{\int_0^1 |\nabla u(x+rz)| dr}{|z|^{d}}dzdt dx\\
     = & M_0^{2s-1}\int_0^1\int_0^2\int \chi_{\{z\in B_2: |z| > \frac{t^{\frac{1}{2-2s}}}{M_0}\}}\int_{B_1}\chi_{\{x \in B_1: \, x+z\in B_1\}}\frac{|\nabla u(x+rz)|}{|z|^{d}}dxdz dtdr\\
     \le & M_0^{2s-1 }\int_{B_1}|\nabla u(x)|dx\int_0^1\int_0^2\int \chi_{\{z\in B_2: |z| > \frac{t^{\frac{1}{2-2s}}}{M_0}\}}\frac{1}{|z|^d}dz dtdr\\
     = & M_0^{2s-1 }\int_{B_1}|\nabla u(x)|dx\int_0^1\int_0^{(2M_0)^{2-2s} \wedge 2}\int \chi_{\{z\in B_2: |z| > \frac{t^{\frac{1}{2-2s}}}{M_0}\}}\frac{1}{|z|^d}dz dtdr\\
     =& dw_{d}M_0^{2s-1 }\int_{B_1}|\nabla u(x)|dx\int_0^1\int_0^{(2M_0)^{2-2s} \wedge 2}\left(\log(2M_0)-\frac{1}{2-2s}\log t\right) dtdr\\
     = & 2\wedge (2M_0)^{2-2s}d\omega_dM_0^{2s-1 }\int_{B_1}|\nabla u(x)|dx\left(\log (2M_0)+\frac{1-log\left(2\wedge (2M_0)^{2-2s}\right)}{2-2s}\right)\\
     \le& \frac{1}{1-s}d\omega_dM_0^{2s-1}\Big((2-2s)\log(2M_0)+1\Big)\int_{B_1}|\nabla u(x)| dx,
\end{align*}where in the above we have used that the layer-cake formula for nonnegative function $g \in L^1(d\lambda)$, $\lambda$ being a Radon measure, 
\begin{align*}
    \int g(x) H(x) d\lambda =\int_{0}^{\Vert g \Vert _{\infty}}\int_{\{x :g(x)>t\}}H(x)d\lambda dt,
\end{align*} that for $s \in [1/2,1)$,
\begin{align*}
    \{z: |u(x+z)-u(x)|^{2-2s}>t\} \subset \{z: |z| > \frac{t^{\frac{1}{2-2s}}}{M_0}\},
\end{align*}
and that $x\in B_1, \, x+z \in B_1$ implies
\begin{align*}
    x+rz=r(x+z)+(1-r)x \in B_1, \quad  \mbox{by convexity of $B_1$}
\end{align*}
\end{proof}


The following corollary can be obtained by modifying the proof of Lemma \ref{betterestimate}, and it might have some independent interest.
\begin{corollary}
Let $L_0 \ge 2$. then for any $|u| \le 1$, $\Vert \nabla u \Vert _{L^{\infty}(B_1)} \le L_0$ and any $p>1$, the following estimate holds:
\begin{align*}
    \int_{B_1}\int_{B_1}\frac{|u(x)-u(y)|^p}{|x-y|^{d+1}} dx dy\le C(d,p) \log(L_0) \int_{B_1} |\nabla u(x)| dx.
\end{align*}
\end{corollary}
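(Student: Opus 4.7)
The plan is to mirror the proof of Lemma \ref{betterestimate} essentially verbatim, adapting the exponents. The key idea is again to split the integrand into two factors: one that, after Fubini, produces $\int_{B_1}|\nabla u|$, and another whose super-level sets are controlled by the Lipschitz bound via a layer-cake decomposition. Concretely, I would write
\[
\frac{|u(x+z)-u(x)|^p}{|z|^{d+1}} \;=\; |u(x+z)-u(x)|^{p-1}\cdot \frac{|u(x+z)-u(x)|}{|z|^{d+1}},
\]
noting that the first factor is bounded by $2^{p-1}$ since $|u|\le 1$.

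Next, I would apply the layer-cake formula to $|u(x+z)-u(x)|^{p-1}$. The inclusion
\[
\{z:|u(x+z)-u(x)|^{p-1}>t\}\subset \Big\{z:|z|>\tfrac{t^{1/(p-1)}}{L_0}\Big\}
\]
follows from $\|\nabla u\|_{L^\infty(B_1)}\le L_0$ (and the segment from $x$ to $x+z$ stays in $B_1$ by convexity, provided both endpoints are in $B_1$). For the second factor, I would use the integral representation $|u(x+z)-u(x)|\le |z|\int_0^1 |\nabla u(x+rz)|\,dr$ to reduce to $|z|^{-d}$. After switching the order of integration (Fubini) and performing the change of variables $w=x+rz$ in the $x$-integration, one obtains the upper bound
\[
L_0^{\,0}\int_{B_1}|\nabla u(w)|\,dw\,\int_0^1\!\!\int_0^{2^{p-1}}\!\!\!\int_{\{z\in B_2:\,|z|>t^{1/(p-1)}/L_0\}}\frac{dz\,dt\,dr}{|z|^d}.
\]
The inner spatial integral equals $d\omega_d\log\!\big(2L_0/t^{1/(p-1)}\big)$, which is valid since $t\le 2^{p-1}$ and $L_0\ge 1$ ensure $t^{1/(p-1)}/L_0\le 2$.

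Integrating in $t$ from $0$ to $2^{p-1}$ produces $d\omega_d\, 2^{p-1}\!\left[\log L_0 + \tfrac{1}{p-1}\right]$, and the hypothesis $L_0\ge 2$ absorbs the additive constant into $\log L_0$. This yields the desired bound $C(d,p)\log L_0 \int_{B_1}|\nabla u|$. I do not expect any real obstacle: the computation is a direct analogue of Lemma \ref{betterestimate} where the role of $|z|^{2s-1}$ (which in the lemma was eaten by the Lipschitz bound to produce the factor $L_0^{2s-1}$) is now trivial since the denominator is exactly $|z|^{d+1}$, so no such additional power of $L_0$ appears. The only point requiring mild care is tracking that the constant blows up as $p\to 1^+$, which is consistent with the hypothesis $p>1$, and verifying the restriction $t^{1/(p-1)}/L_0\le 2$ needed for the logarithmic spatial integral to apply cleanly.
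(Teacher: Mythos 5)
Your proof is correct and is precisely the adaptation of Lemma~\ref{betterestimate} that the paper alludes to (the authors omit the details, saying only that the corollary ``can be obtained by modifying the proof of Lemma~\ref{betterestimate}''). The factorization of the integrand, the layer-cake decomposition of $|u(x+z)-u(x)|^{p-1}$ with the super-level-set inclusion from the Lipschitz bound, the gradient integral representation for the remaining factor, and the final $t$-integration all carry over exactly as you describe, and the constant $C(d,p)=d\omega_d\,2^{p-1}\bigl(1+\tfrac{1}{(p-1)\log 2}\bigr)$ you obtain correctly reflects the necessary blow-up as $p\to 1^+$.
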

We omit the proof of this corollary.
\hfill\\

Now we prove Lemma \ref{embedding}.
\begin{proof}[Proof of Lemma \ref{embedding}]
For $0<s<1/2$, we estimate
\begin{align*}
    \mathcal{J}^s(u,B_{R})=& \int\int_{\mathbb{R}^d \times \mathbb{R}^d \setminus (B_{R}^c \times B_{R}^c)} \frac{|u(x)-u(y)|^2}{|x-y|^{d+2s}}dydx\\
    \le & \int\int_{B_{2R} \times B_{2R}} \frac{|u(x)-u(y)|^2}{|x-y|^{d+2s}}dydx +2\int\int_{B_{R} \times B_{2R}^c} \frac{|u(x)-u(y)|^2}{|x-y|^{d+2s}}dydx\\
    \le & 2\int\int_{B_{2R} \times B_{2R}} \frac{|u(x)-u(y)|}{|x-y|^{d+2s}}dydx+C(d,s)R^{d-2s}, \quad \mbox{since $|u|\le 1$}\\
    =& 2[u]_{W^{2s,1}(B_{2R})}+C(d,s)R^{d-2s}\\
    \le & C(d,s)\Vert u \Vert_{W^{1,1}(B_{2R})}+C(d,s)R^{d-2s}, \quad \mbox{by Proposition \ref{prop2.2}}\\
    \le & C(d,s)\left( \int_{B_{2R}}|\nabla u|dx+R^{d-2s}+R^d\right).
\end{align*}
This concludes \eqref{lessthan1/2}.

Let us now prove the lemma for the case $1/2 \le s<1$. For any ball $B_R(x_0) \subset \mathbb{R}^d$, we let $u_R:=u(x_0+Rx)$, and thus $\Vert \nabla u_R \Vert_{L^{\infty}} =R \Vert \nabla u \Vert_{L^{\infty}}\le   RL_0$. 

By applying Lemma \ref{betterestimate} to $u_R$ and using the scaling properties
\begin{align*}
    \mathcal{J}^s(u_R, B_1) =R^{2s-d} \mathcal{J}(u,B_R(x_0)) \quad \mbox{and } \quad \int_{B_1} |\nabla u_R| dx =R^{1-d} \int_{B_R(x_0)} |\nabla u| dx,
\end{align*}
we thus derive
\begin{align}
\label{byscaling}
    \int\int_{B_{R} \times B_{R}} \frac{|u(x)-u(y)|^2}{|x-y|^{d+2s}}dydx \le C(d,s)L_0^{2s-1}\log(RL_0)\int_{B_{R}}|\nabla u(x)| dx.
\end{align}
Therefore, \eqref{biggerthan1/2} is from the following straightforward computation
\begin{align*}
    \mathcal{J}^s(u,B_{R})=& \int\int_{\mathbb{R}^d \times \mathbb{R}^d \setminus (B_{R}^c \times B_{R}^c)} \frac{|u(x)-u(y)|^2}{|x-y|^{d+2s}}dydx\\
    \le & \int\int_{B_{2R} \times B_{2R}} \frac{|u(x)-u(y)|^2}{|x-y|^{d+2s}}dydx +2\int\int_{B_{R} \times B_{2R}^c} \frac{|u(x)-u(y)|^2}{|x-y|^{d+2s}}dydx\\
    \le & \int\int_{B_{2R} \times B_{2R}} \frac{|u(x)-u(y)|^2}{|x-y|^{d+2s}}dydx+C(d,s)R^{d-2s}, \quad \mbox{since $|u|\le 1$}\\
    \le &  C(d,s)\left( R^{d-2s}+L_0^{2s-1}\log (2L_0R)\int_{B_{2R}} |\nabla u(x)|dx \right), \quad \mbox{by \eqref{byscaling}.}
\end{align*}
\end{proof}

\section{Local BV estimate scheme for any power $0<s<1$ and Proof of Proposition \ref{generalenergyestimates}}
As we mentioned in introduction, the local BV estimate scheme was first developed in \cite{CSE} and adapted by Figalli and Serra in \cite{FS17} for the study of stable solutions to \eqref{AC1} when $s=1/2$. In this section we show that thanks to Lemma \ref{embedding}, the scheme can be applied to give certain energy estimates for every fractional power $0<s<1$, as stated in Proposition \ref{generalenergyestimates}.

First, to utilize the stability condition of solution $u$ to \eqref{AC1}, following \cite{FS17}, see also \cite[Lemma 4.3]{BV16}, we construct suitable variations of energy with respect to a direction $\bf{v}$, where $\bf{v}$ is a fixed unit vector in $\mathbb{R}^d$.

Let $R \ge 1$ and \begin{align*}
    \psi_{t,\bf{v}}(x):=x+t \phi(x) \bf{v},
\end{align*}
where \begin{align}
\label{linearcutoff}
    \phi(x)=\begin{cases} 1,\quad &|x|\le \frac{R}{2}\\
    2-2\frac{|x|}{R},\quad &\frac{R}{2}\le |x| \le R\\
    0, \quad &|x| \ge R.
    \end{cases}
\end{align}
It is clear that when $|t|$ small, $\psi_{t,\bf{v}}$ is a Lipschitz diffeomorphism, and thus it has an inverse. Define
\begin{align*}
    P_{t, \bf{v}}u(x):=u\left(\psi_{t,\bf{v}}^{-1}(x)\right).
\end{align*}
\begin{remark}
\label{variation}
It is clear that for $x \in B_{1/2}$, if $|t|$ is small, then $P_{t, \bf{v}}u(x)=u(x-t\bf{v})$.
\end{remark}

To simplify notation, we define the second variation operator $\Delta ^t_{\bf{v}\bf{v}}$ with respect to $\bf{v}$ on any functional $\mathcal{J}$ to be as 
\begin{align*}
    \Delta ^t_{\bf{v}\bf{v}}\mathcal{J}(u,\Omega):=\mathcal{J} (P_{t,\bf{v}}u,\Omega)+ \mathcal{J}(P_{-t,\bf{v}}u,\Omega)-2\mathcal{J}(u,\Omega).
\end{align*}

The following estimate for the second variation of fractional energy is proved in \cite[Lemma 4.3]{BV16} and \cite[Lemma 2.1]{FS17}. For the courtesy of reader, we include a proof.
\begin{lemma}
\label{domainvariation}
\begin{align*}
    \Delta^t_{\bf{v},\bf{v}}\mathcal{J}^s(u,B_R) \le C(d,s)t^2\frac{\mathcal{J}^s(u,B_R)}{R^2}, \quad \forall R\ge 1.
\end{align*}
\end{lemma}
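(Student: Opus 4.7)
The plan is a direct Taylor expansion, in the parameter $t$, of the kernel of the quadratic form defining $\mathcal{J}^s$ under the change of variables induced by $\psi_{t,\bf{v}}$. Since $\phi$ vanishes outside $B_R$, the map $\psi_{t,\bf{v}}$ is the identity on $B_R^c$ and, for $|t|$ small enough (say $|t|\le R/8$), is a bi-Lipschitz diffeomorphism of $\mathbb{R}^d$ sending $B_R$ to itself; in particular the integration set $\mathbb{R}^d\times\mathbb{R}^d\setminus(B_R^c\times B_R^c)$ appearing in the definition of $\mathcal{J}^s$ is preserved. Pushing the map onto the integration variables in the integral defining $\mathcal{J}^s(P_{t,\bf{v}}u, B_R)$ will then yield
\begin{equation*}
\mathcal{J}^s(P_{t,\bf{v}}u, B_R) = \tfrac{1}{2}\iint K_t(x,y)\,|u(x)-u(y)|^2\,dx\,dy, \qquad K_t(x,y) := \frac{J_t(x)J_t(y)}{|\psi_{t,\bf{v}}(x)-\psi_{t,\bf{v}}(y)|^{d+2s}},
\end{equation*}
with $J_t(x) = |\det D\psi_{t,\bf{v}}(x)|$. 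The whole problem then reduces to the pointwise estimate $K_t(x,y) + K_{-t}(x,y) - 2K_0(x,y) \le C(d,s)(t^2/R^2)\,K_0(x,y)$; integrating against $|u(x)-u(y)|^2$ yields the conclusion at once.

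To establish the pointwise estimate, I would factor
\begin{equation*}
K_t(x,y) = |x-y|^{-(d+2s)}\,H(t;x,y),\qquad H(t;x,y) := J_t(x)J_t(y)\left(\frac{|x-y|^2}{|(x-y)+t(\phi(x)-\phi(y))\bf{v}|^2}\right)^{(d+2s)/2},
\end{equation*}
so that $H$ is smooth in $t$ for $|t|\le R/8$ and $H(0;x,y)=1$. Two applications of Taylor's theorem give
\begin{equation*}
H(t;x,y) + H(-t;x,y) - 2H(0;x,y) \le t^2\sup_{|\tau|\le t}|\partial_\tau^2 H(\tau;x,y)|,
\end{equation*}
so it suffices to show $|\partial_\tau^2 H(\tau;x,y)|\le C(d,s)/R^2$ uniformly in $(x,y)$. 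Since $D\psi_{\tau,\bf{v}}(x) = I + \tau\,\bf{v}\otimes\nabla\phi(x)$ is a rank-one perturbation of the identity, one computes $J_\tau(x) = 1+\tau\,\bf{v}\cdot\nabla\phi(x)$ almost everywhere; combined with the Lipschitz bound $|\nabla\phi|\le 2/R$ (and hence $|\phi(x)-\phi(y)|\le (2/R)|x-y|$), every $\tau$-derivative of the Jacobian factor or of the rescaled quadratic form $|(x-y)+\tau(\phi(x)-\phi(y))\bf{v}|^2/|x-y|^2$ picks up a factor of $1/R$. Moreover, for $|\tau|\le R/8$ the denominator satisfies $|(x-y)+\tau(\phi(x)-\phi(y))\bf{v}| \ge \tfrac{3}{4}|x-y|$, so negative powers of it remain uniformly comparable to $|x-y|^{-(d+2s)}$ and thus do not spoil the estimate.

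I expect the main obstacle to be the clean bookkeeping in the last step: expanding $\partial_\tau^2\!\left[J_\tau(x)J_\tau(y)(\cdot)^{(d+2s)/2}\right]$ produces several cross terms ($J''J$, $J'J'$, $JF''$, $J'F'$, \emph{etc.}), and each has to be shown to carry a uniform $1/R^2$ prefactor once the lower bound on the perturbed denominator is used. For $|t|$ beyond the smallness threshold $R/8$, the estimate is trivial after absorbing the (fixed) dimensional constant. Putting everything together yields
\begin{equation*}
\Delta^t_{\bf{v},\bf{v}}\mathcal{J}^s(u,B_R) = \tfrac{1}{2}\iint\bigl[K_t + K_{-t} - 2K_0\bigr]|u(x)-u(y)|^2\,dx\,dy \le C(d,s)\,\frac{t^2}{R^2}\,\mathcal{J}^s(u,B_R),
\end{equation*}
which is exactly the desired bound.
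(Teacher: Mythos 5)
Your plan is correct and is essentially the argument given in the paper: change variables by $\psi_{t,\mathbf{v}}$ (which preserves $\mathbb{R}^d\times\mathbb{R}^d\setminus(B_R^c\times B_R^c)$ since $\phi$ vanishes outside $B_R$), factor the unperturbed kernel $|x-y|^{-(d+2s)}$ out of the transformed integrand, and show that the remaining factor has second $t$-derivative bounded by $C(d,s)/R^2$ uniformly in $(x,y)$, which comes from the Lipschitz bound $|\nabla\phi|\le 2/R$. The paper phrases this with an explicit second-order Taylor expansion of the Jacobian $JF_t=1+t\,\mathrm{div}\,\eta+t^2A(\eta)+O(t^3)$ and of $K(z/|z|\pm t\epsilon)$ for a general compactly supported vector field $\eta$ before specializing to $\eta=\phi\mathbf{v}$; your version, which notes the exact rank-one determinant formula $J_\tau(x)=1+\tau\,\mathbf{v}\cdot\nabla\phi(x)$ and controls the Lagrange remainder uniformly via the lower bound $|(x-y)+\tau(\phi(x)-\phi(y))\mathbf{v}|\ge\tfrac34|x-y|$, is a mild streamlining of the same computation rather than a different route.
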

\begin{proof}
We start with more general domain variations as follows. We consider the map
\begin{align}
    F_t(x):=x+t \eta (x),
\end{align}where $\eta$ is a smooth vector field vanishing outside $B_R$. We set
\begin{align}
    P_tu(x):=u(F_t^{-1}(x)).
\end{align}
We estimate
\begin{align*}
    \Delta^t\mathcal{J}^s(u,B_R):=\mathcal{J}^s(P_tu,B_R)+\mathcal{J}^s(P_{-t}u,B_R)-2\mathcal{J}^s(u,B_R)
\end{align*}
We use $\tilde{B_R}$ to denote $\mathbb{R}^d \times \mathbb{R}^d \setminus (B_R \times B_R)$. In the following computation, $z=x-y$ and $\epsilon(x,y):=\frac{\eta(x)-\eta(y)}{|x-y|}$. Since the Taylor expansion of the Jacobian of $F_t$ is 
\begin{align*}
    JF_t=1+t\div \eta+t^2 A(\eta)+O(t^3),
\end{align*}where
\begin{align*}
    A(\eta)=\frac{(\div \eta)^2 -tr(\nabla \eta)^2}{2},
\end{align*}
we can compute
\begin{align*}
    \Delta^t\mathcal{J}^s(u,B_R)=&\int\int_{\tilde{B_R}}|u(x)-u(y)|^2\Big(K(z+t\epsilon|z|)(1+t\div \eta(x)+A(\eta(x))t^2)(1+t\div \eta(y)+A(\eta(y))t^2)\\
    &+K(z-t\epsilon|z|)(1-t\div \eta(x)+A(\eta(x))t^2)(1-t\div \eta(y)+A(\eta(y))t^2)-2K(z)\Big)dydx\\
    :=& \int\int_{\tilde{B_R}}|u(x)-u(y)|^2 e(x,y,\eta, R) dydx,
\end{align*}where $K(z)=\frac{1}{|z|^{d+2s}}$.
Use that 
\begin{align*}
    K(az)=|a|^{-d-2s}K(z), \quad \forall a \in \mathbb{R},
\end{align*}we have
\begin{align*}
    &e(x,y,\eta, R)\\
    =&K(z)\Big(K(\frac{z}{|z|}+t\epsilon)(1+t\div \eta(x)+A(\eta(x))t^2)(1+t\div \eta(y)+A(\eta(y))t^2)\\
    &+K(\frac{z}{|z|}-t\epsilon)(1-t\div \eta(x)+A(\eta(x))t^2)(1-t\div \eta(y)+A(\eta(y))t^2)-2K(\frac{z}{|z|})\Big)\\
    =&K(z)\Big(\left(K(z/|z|)+t\nabla K(z/|z|)\epsilon+\frac{t^2}{2} <\nabla^2 K(z/|z|)\epsilon, \epsilon>+O(t^3)\right)(1+t\div \eta(x)+A(\eta(x))t^2)\\
    &\cdot(1+t\div \eta(y)+A(\eta(y))t^2)+ \left(K(z/|z|)-t\nabla K(z/|z|)\epsilon+\frac{t^2}{2} <\nabla^2 K(z/|z|)\epsilon, \epsilon>+O(t^3)\right)\\
    &\cdot(1-t\div \eta(x)+A(\eta(x))t^2)(1-t\div \eta(y)+A(\eta(y))t^2)-2K(z/|z|)\Big)\\
    =&2K(z)t^2\Big(A(\eta(x))+A(\eta(y))+\div \eta(x)\div \eta(y)+(\div \eta(x)+\div \eta (y))\nabla K(z/|z|) \epsilon+ <\nabla^2 K(z/|z|)\epsilon, \epsilon>\Big)\\
    &+O(t^3)\\
    \le & C(d,s) \Vert \nabla \eta \Vert_{L^{\infty}(B_R)}^2K(z)t^2
\end{align*}
In particular, if we choose $\eta(x)=\phi(x) \bf{v}$, where $\phi$ is given as \eqref{linearcutoff} and $\mbox{\bf{v}} \in S^{d-1}$, then we have
\begin{align*}
    \Delta^t_{\bf{v},\bf{v}}\mathcal{J}^s(u,B_R) \le C(d,s)t^2\frac{\mathcal{J}^s(u,B_R)}{R^2}.
\end{align*}
\end{proof}

Next, we prove the following identity related to nonlocal fractional energy, which was implicitly used in the proof of \cite[Lemma 2.2]{FS17}. 
\begin{lemma}
\label{identity}
Let $\Omega \subset \mathbb{R}^d$. For any functions $u,v$ in appropriate spaces, let $u \lor v:=\max\{u,v\}$ and $u \land v:=\min\{u,v\}$, we have the identity
\begin{align}
    \mathcal{J}^s(v,\Omega)+\mathcal{J}^s(u,\Omega)-\mathcal{J}^s(u \lor v,\Omega)-\mathcal{J}^s(u \land v,\Omega) =2 \int\int_{\mathbb{R}^d \times \mathbb{R}^d \setminus (\Omega^c \times \Omega^c)}(v-u)_+(x)(v-u)_-(y)K(x-y)dydx,
\end{align}where $K(z)=\frac{1}{|z|^{d+2s}}$, $(v-u)_+=(v-u)\lor 0$ and $(v-u)_-=(v-u) \land 0$.
\end{lemma}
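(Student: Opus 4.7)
My approach is to prove the identity pointwise on the integrand and then exploit the $(x,y)\mapsto(y,x)$ symmetry of both $K$ and the integration domain to consolidate. I would fix $(x,y)\in\mathbb{R}^d\times\mathbb{R}^d\setminus(\Omega^c\times\Omega^c)$ and split into four cases based on the signs of $v(x)-u(x)$ and $v(y)-u(y)$.

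In the two \emph{aligned} cases (where $v-u$ has the same sign at both $x$ and $y$), $u\lor v$ and $u\land v$ coincide with $u$ and $v$ (in one of the two orders) at both points, so
\begin{align*}
|u(x)-u(y)|^2+|v(x)-v(y)|^2-|(u\lor v)(x)-(u\lor v)(y)|^2-|(u\land v)(x)-(u\land v)(y)|^2=0,
\end{align*}
and simultaneously $(v-u)_+(x)(v-u)_-(y)=0$, because one of the two factors is zero at the corresponding point. The substantive algebraic content is confined to the two \emph{mixed} cases. In the case $v(x)\ge u(x),\, v(y)\le u(y)$, write $a=u(x),\, b=v(x),\, c=u(y),\, d=v(y)$, so $(u\lor v)(x)=b,\,(u\lor v)(y)=c,\,(u\land v)(x)=a,\,(u\land v)(y)=d$; a direct expansion yields
\begin{align*}
(a-c)^2+(b-d)^2-(b-c)^2-(a-d)^2=2(a-b)(d-c),
\end{align*}
which, combined with the $\tfrac12$ in the definition of $\mathcal{J}^s$, produces the pointwise contribution $(u(x)-v(x))(v(y)-u(y))K(x-y)$ to the left-hand side. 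The symmetric mixed case $v(x)\le u(x),\, v(y)\ge u(y)$ contributes the same quantity with the roles of $x$ and $y$ exchanged.

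Finally, I would invoke the $(x,y)\leftrightarrow(y,x)$ invariance of both $K$ and the integration domain $\mathbb{R}^d\times\mathbb{R}^d\setminus(\Omega^c\times\Omega^c)$ to conclude that the two mixed-case integrals are equal, thereby producing the overall factor of $2$ on the right-hand side. No genuine obstacle arises; the entire argument reduces to elementary algebra and a single Fubini-type symmetry. The only bookkeeping point to watch is the sign convention for the negative part: under the literal reading $(v-u)_-=(v-u)\land 0\le 0$ one obtains a nonpositive right-hand side, whereas the left-hand side is nonnegative by the well-known submodularity $\mathcal{J}^s(u\lor v)+\mathcal{J}^s(u\land v)\le \mathcal{J}^s(u)+\mathcal{J}^s(v)$; one therefore reads $(v-u)_-$ as its absolute value $|(v-u)\land 0|$, after which sign and magnitude match exactly in the above pointwise computation.
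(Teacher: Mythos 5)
Your proof is correct and follows essentially the same route as the paper: the paper introduces the set $A=\{v>u\}$ and decomposes the double integral into the ``aligned'' blocks $A\times A$, $A^c\times A^c$ (which cancel) and the ``mixed'' blocks $A\times A^c$, $A^c\times A$, performs exactly the same quadratic expansion you do, and then uses the $(x,y)\leftrightarrow(y,x)$ symmetry of $K$ and of $\tilde\Omega$ to collapse the two mixed pieces into one with the factor $2$. Your closing observation about the sign convention is also a genuine point: taken literally, the paper's definition $(v-u)_-=(v-u)\land 0\le 0$ would make the right-hand side nonpositive while the left-hand side is nonnegative, so $(v-u)_-$ must indeed be read as $\max\{u-v,0\}$ (equivalently $|(v-u)\land 0|$), which is the convention that makes both the lemma statement and the subsequent Remark on submodularity consistent.
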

\begin{proof}
Define sets
\begin{align*}
    A:=\{x \in \mathbb{R}^d: v(x)>u(x)\}
\end{align*} and
\begin{align*}
    \tilde{\Omega}:=\mathbb{R}^d \times \mathbb{R}^d \setminus (\Omega^c \times \Omega^c).
\end{align*}
Then we calculate
\begin{align*}
    &\mathcal{J}^s(v,\Omega)+\mathcal{J}^s(u,\Omega)-\mathcal{J}^s(u \lor v,\Omega)-\mathcal{J}^s(u \land v,\Omega)\\
     =&
    \int \int_{(A \times A^c) \cap \tilde{\Omega}}\left(|v(x)-v(y)|^2-|v(x)-u(y)|^2\right)K(x-y)dydx\\
    &+\int \int_{(A^c \times A) \cap \tilde{\Omega}}\left(|v(x)-v(y)|^2-|u(x)-v(y)|^2\right)K(x-y)dydx\\
    &+\int \int_{(A \times A^c) \cap \tilde{\Omega}}\left(|u(x)-u(y)|^2-|u(x)-v(y)|^2\right)K(x-y)dydx\\
    &+\int \int_{(A^c \times A) \cap \tilde{\Omega}}\left(|u(x)-u(y)|^2-|v(x)-u(y)|^2\right)K(x-y)dydx\\
    =&2\int \int_{(A \times A^c) \cap \tilde{\Omega}}\left((v(x)-u(x))(u(y)-v(y))\right)K(x-y)dydx\\
    &+2\int \int_{(A^c \times A) \cap \tilde{\Omega}}\left((u(x)-v(x))(v(y)-u(y))\right)K(x-y)dydx\\
    =&2 \int\int_{\mathbb{R}^d \times \mathbb{R}^d \setminus (\Omega^c \times \Omega^c)}(v-u)_+(x)(v-u)_-(y)K(x-y)dydx,
\end{align*}
\end{proof}

\begin{remark}
The above lemma implies
\begin{align*}
    \mathcal{J}^s(v,\Omega)+\mathcal{J}^s(u,\Omega) \ge \mathcal{J}^s(u \lor v,\Omega)+\mathcal{J}^s(u \land v,\Omega),
\end{align*}and $"="$ holds only if either $v \le u$ or $v \ge u$ in $\Omega$.  To our knowledge this was first used in \cite[Corollary 3]{PSV}, and it is really reveals the nonlocal feature of fractional energies.
\end{remark}

By using the matrix determinant lemma
\begin{align*}
    det(I+\alpha \otimes \beta)=1+\alpha \cdot \beta
\end{align*}where $\alpha, \beta$ are two vectors, one can also check that 
\begin{align}
\label{ganji2}
    \Delta^t_{\bf{v}\bf{v}}\mathcal{J}^P(u,B_1)=0.
\end{align}
This together with lemma \ref{domainvariation} immediately yields

\begin{lemma}
\label{2.1}
There exists universal constant $C=C(d,s)>0$ such that
\begin{align*}
    \Delta ^t_{\bf{v}\bf{v}}\mathcal{J}(u,B_R) \le Ct^2\mathcal{J}^s(u,B_R)/R^2.
\end{align*}
\end{lemma}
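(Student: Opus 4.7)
The plan is to exploit the fact that the operator $\Delta^t_{\mathbf{v}\mathbf{v}}$ is linear in its functional argument, so that writing $\mathcal{J}(u,B_R) = \mathcal{J}^s(u,B_R) + \mathcal{J}^P(u,B_R)$ reduces the task to bounding each piece separately. For the nonlocal kinetic piece, Lemma~\ref{domainvariation} already delivers exactly the desired inequality $\Delta^t_{\mathbf{v}\mathbf{v}}\mathcal{J}^s(u,B_R) \le C(d,s)\,t^2 \mathcal{J}^s(u,B_R)/R^2$, so no further work is required on that side. All that remains is to verify \eqref{ganji2}, i.e.\ that $\Delta^t_{\mathbf{v}\mathbf{v}}\mathcal{J}^P(u,B_R) = 0$, and to add the two contributions.

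For the potential piece I would change variables $y = \psi_{t,\mathbf{v}}^{-1}(x)$ in
\begin{equation*}
\mathcal{J}^P(P_{t,\mathbf{v}}u,B_R) \;=\; \int_{B_R} F\bigl(u(\psi_{t,\mathbf{v}}^{-1}(x))\bigr)\,dx.
\end{equation*}
Since $\phi$ is supported in $\overline{B_R}$ and vanishes on $\partial B_R$, for $|t|$ small $\psi_{t,\mathbf{v}}$ is a Lipschitz diffeomorphism of $B_R$ onto itself (and the Jacobian stays positive), so
\begin{equation*}
\mathcal{J}^P(P_{t,\mathbf{v}}u,B_R) \;=\; \int_{B_R} F(u(y))\,\det D\psi_{t,\mathbf{v}}(y)\,dy.
\end{equation*}

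The key observation is that $D\psi_{t,\mathbf{v}}(y) - I = t\,\mathbf{v}\otimes\nabla\phi(y)$ is a rank-one perturbation of the identity, so the matrix determinant lemma $\det(I+\alpha\otimes\beta) = 1 + \alpha\cdot\beta$ gives \emph{exactly}, with no higher-order-in-$t$ remainder,
\begin{equation*}
\det D\psi_{t,\mathbf{v}}(y) \;=\; 1 + t\,\mathbf{v}\cdot\nabla\phi(y).
\end{equation*}
Replacing $t$ by $\pm t$ and adding produces exact cancellation of the linear term, yielding $\mathcal{J}^P(P_{t,\mathbf{v}}u,B_R) + \mathcal{J}^P(P_{-t,\mathbf{v}}u,B_R) = 2\mathcal{J}^P(u,B_R)$, which is precisely \eqref{ganji2}. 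Combined with Lemma~\ref{domainvariation}, this gives the claim with the same constant $C(d,s)$.

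There is no genuine technical obstacle here; the only subtlety worth flagging is the rank-one structure of $D\psi_{t,\mathbf{v}} - I$, which is exactly what kills every $O(t^2)$ contribution on the potential side and explains why the entire $R^{-2}$ scaling in the second variation of $\mathcal{J}$ originates from the nonlocal kinetic term alone.
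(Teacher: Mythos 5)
Your proposal is correct and follows essentially the same approach as the paper: decompose $\mathcal{J}=\mathcal{J}^s+\mathcal{J}^P$, invoke Lemma~\ref{domainvariation} for $\mathcal{J}^s$, and use the matrix determinant lemma on the rank-one perturbation $D\psi_{t,\mathbf{v}}=I+t\,\mathbf{v}\otimes\nabla\phi$ to conclude $\Delta^t_{\mathbf{v}\mathbf{v}}\mathcal{J}^P(u,B_R)=0$. The paper states \eqref{ganji2} without detail, and your change-of-variables computation supplies exactly the verification it leaves to the reader.
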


\hfill\\

For the rest, unless otherwise specified, we write $C$ as various universal constants depending on $d$ and $s$.

The next lemma, which is from \cite[Lemma 2.2]{FS17}, dealing with the case $s=1/2$ and in the same spirit of \cite[Lemma 2.5]{CSE},  gives upper bound for the interior BV-norm of $u$ by the $s$-fractional energy in a larger ball. Again, the proof in \cite[Lemma 2.2]{FS17} works for all fractional powers $0<s<1$. We state the result and include the proof as courtesy to the readers.
\begin{lemma}
\label{2.2}
Let $u$ be a stable solution to \eqref{AC}, then there exists a universal constant $C=C(d,s)$ such that for any $R \ge 1$,
\begin{align}
\label{1}
    \left(\int_{B_{1/2}} (\partial_{\bf{v}}u(x))_+dx\right) \left( \int_{B_{1/2}} (\partial_{\bf{v}}u(y))_-dy \right) \le C\mathcal{J}^s(u,B_R)/R^2
\end{align}and
\begin{align}
\label{2}
    \int_{B_{1/2}}|\nabla u(x)| dx \le C\left(1+\sqrt{\mathcal{J}^s(u,B_1)}\right).
\end{align}
\end{lemma}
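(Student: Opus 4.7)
The plan is to adapt the max/min competitor argument of \cite[Lemma 2.2]{FS17} to arbitrary $s\in(0,1)$, combining Lemma \ref{identity}, Lemma \ref{2.1}, and the stability of $u$. Fix a unit vector ${\bf v}$ and define the competitors $w_t := P_{t,{\bf v}}u \vee P_{-t,{\bf v}}u$ and $z_t := P_{t,{\bf v}}u \wedge P_{-t,{\bf v}}u$; both coincide with $u$ outside $B_R$ because $\phi$ is supported there. Applying Lemma \ref{identity} to the fractional part (with the pair $P_{\pm t,{\bf v}}u$) and the pointwise identity $F(a)+F(b)=F(a\vee b)+F(a\wedge b)$ to the potential part yields
\[
\mathcal{J}(P_{t,{\bf v}}u, B_R) + \mathcal{J}(P_{-t,{\bf v}}u, B_R) - \mathcal{J}(w_t, B_R) - \mathcal{J}(z_t, B_R) = 2I_t,
\]
where $I_t\ge 0$ is the double integral over $\tilde{B_R}$ of the product of positive and negative parts of $P_{t,{\bf v}}u - P_{-t,{\bf v}}u$ weighted by $K(x-y)$.

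Next, exploit stability. Since $w_t-u$ and $z_t-u$ are Lipschitz with support in $B_R$, they are admissible test functions (the stability quadratic form extends from $C_0^2$ to $H^s_0(B_R)$ by density). Because $u$ solves the Euler--Lagrange equation and $f\in C^{1,\alpha}$, a Taylor expansion of $\mathcal J(\cdot,B_R)$ at $u$ gives
\[
\mathcal J(u+\psi, B_R) - \mathcal J(u, B_R) = \tfrac12 Q(\psi) + O\Big(\int |\psi|^{2+\alpha}\Big),
\]
where $Q\ge 0$ by stability. Applied to $\psi\in\{w_t-u,\, z_t-u\}$ (each with $L^\infty$-norm $\le Ct$ and support in $B_R$) and summed, this yields $\mathcal J(w_t, B_R)+\mathcal J(z_t, B_R)-2\mathcal J(u, B_R) \ge -C_R t^{2+\alpha}$. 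Combining with the identity above and Lemma \ref{2.1},
\[
2 I_t \le \Delta^t_{{\bf v}{\bf v}}\mathcal{J}(u, B_R) + C_R t^{2+\alpha} \le \frac{C t^2\, \mathcal{J}^s(u, B_R)}{R^2} + C_R t^{2+\alpha}.
\]

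For the matching lower bound on $I_t$, restrict the integral to $B_{1/2}\times B_{1/2}$. By Remark \ref{variation}, $P_{\pm t,{\bf v}}u(x)=u(x\mp t{\bf v})$ on $B_{1/2}$ for small $t$, so $P_{t,{\bf v}}u-P_{-t,{\bf v}}u = -2t\,\partial_{\bf v}u + O(t^2)$ uniformly there; together with $K(x-y)\ge 1$ for $|x-y|\le 1$,
\[
I_t \ge 4t^2\Big(\int_{B_{1/2}}(\partial_{\bf v}u)_+\Big)\Big(\int_{B_{1/2}}(\partial_{\bf v}u)_-\Big) + o(t^2).
\]
Dividing by $t^2$ and sending $t\to 0$ (the $t^\alpha$-errors vanish since $\alpha>0$) yields \eqref{1}. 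For \eqref{2}, set $R=1$ in \eqref{1} to get $ab\le C\mathcal J^s(u,B_1)$ where $a:=\int_{B_{1/2}}(\partial_{\bf v}u)_+$ and $b:=\int_{B_{1/2}}(\partial_{\bf v}u)_-$, while the divergence theorem together with $|u|\le 1$ gives $|a-b|=|\int_{B_{1/2}}\partial_{\bf v}u|\le C$. Hence $(a+b)^2=(a-b)^2+4ab\le C(1+\mathcal J^s(u,B_1))$, so $\int_{B_{1/2}}|\partial_{\bf v}u|\le C(1+\sqrt{\mathcal J^s(u,B_1)})$, and summing over ${\bf v}=e_1,\ldots,e_d$ completes \eqref{2}.

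The main technical point is Step 2: the non-smooth competitors $w_t-u$ and $z_t-u$ are only Lipschitz, but admissibility for the stability form follows by density, and the super-quadratic Taylor remainder from $f$ being merely $C^{1,\alpha}$ is $o(t^2)$ precisely because $\alpha>0$ (this is the role of the hypothesis $\alpha>\max\{0,1-2s\}$). Once $t\to 0$, this remainder drops out, leaving the clean bound \eqref{1}, from which \eqref{2} follows by the elementary AM--GM-type algebra above.
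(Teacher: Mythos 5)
Your proof is correct and structurally parallel to the paper's, but it chooses a slightly different (and arguably more symmetric) pair of competitors. The paper sets $\bar{u}=\max\{P_{t,{\bf v}}u,\,u\}$ and $\underline{u}=\min\{P_{t,{\bf v}}u,\,u\}$, applies the rearrangement identity (Lemma \ref{identity}) to the pair $(P_{t,{\bf v}}u,\,u)$, and then must invoke stability \emph{three} times --- on $\bar{u}-u$, on $\underline{u}-u$, and additionally on $P_{-t,{\bf v}}u-u$ (hidden in the step ``add $\mathcal{J}(P_{-t,{\bf v}}u,B_R)-3\mathcal{J}(u,B_R)$ to both sides'') --- before Lemma \ref{2.1} closes the estimate. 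You instead apply Lemma \ref{identity} to the symmetric pair $(P_{t,{\bf v}}u,\,P_{-t,{\bf v}}u)$ and take $w_t=P_{t,{\bf v}}u\vee P_{-t,{\bf v}}u$, $z_t=P_{t,{\bf v}}u\wedge P_{-t,{\bf v}}u$; since the identity's right side is then exactly $\mathcal{J}(P_{t,{\bf v}}u)+\mathcal{J}(P_{-t,{\bf v}}u)$, stability need only be applied \emph{twice} (to $w_t-u$ and $z_t-u$), and $\Delta^t_{{\bf v}{\bf v}}\mathcal{J}(u,B_R)$ appears directly without the extra bookkeeping. The only cosmetic difference is that $P_{t,{\bf v}}u-P_{-t,{\bf v}}u=-2t\,\partial_{\bf v}u+O(t^2)$ on $B_{1/2}$ rather than $-t\,\partial_{\bf v}u+O(t^2)$, so your leading coefficient is $4t^2$ instead of $t^2$; this is absorbed into $C$. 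Both routes otherwise rely on the same three ingredients --- Lemma \ref{identity}, Lemma \ref{2.1}, stability together with $C^{1,\alpha}$ regularity of $f$ making the Taylor remainder $o(t^2)$ --- and your passage from \eqref{1} to \eqref{2} via the AM--GM algebra and the divergence theorem matches the paper's.

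One small point worth being explicit about: when lower-bounding $I_t$ restricted to $B_{1/2}\times B_{1/2}$, you should justify that passing the positive/negative part through the Taylor expansion only costs $O(t^3)$; this follows since $u\in C^2$ gives a uniform $O(t^2)$ remainder and $(a-ct^2)_+\ge a_+-ct^2$ for $c\ge 0$, with the bad set where $|\partial_{\bf v}u|\lesssim t$ contributing only $O(t^3)$ to the product. This is implicit in ``$+\,o(t^2)$'' in your display but deserves a line if written out.
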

\begin{proof}
Let $\bar{u}=\max\{P_{t, \bf{v}}u, u\}$ and $\underline{u}=\min\{P_{t, \bf{v}}u, u\}$. By Lemma \ref{identity} and Remark \ref{variation}, we have
\begin{align*}
    &\mathcal{J}^s(\bar{u},B_R)+\mathcal{J}^s(\underline{u},B_R)+ 2\int_{B_{1/2}}\int_{B_{1/2}}\frac{\left(\mbox{$u(x-t\bf{v})$}-u(x)\right)_+\left(\mbox{$u(y-t\bf{v}$})-u(y)\right)_-}{|x-y|^{d+2s}} dydx\\
    &\le \mathcal{J}^s(P_{t, \bf{v}}u,B_R)+\mathcal{J}^s(u,B_R).
\end{align*}
We also have
\begin{align*}
    &\mathcal{J}^P(\bar{u},B_R)+\mathcal{J}^P(\underline{u},B_R)\\
    =& \int_{\{P_{t, \bf{v}}u>u\}\cap B_R} F(P_{t, \bf{v}}u) +\int_{\{P_{t, \bf{v}}u<u\}\cap B_R} F(u)+\int_{\{P_{t, \bf{v}}u<u\}\cap B_R} F(P_{t, \bf{v}}u) +\int_{\{P_{t, \bf{v}}u>u\}\cap B_R} F(u)\\
    =& \mathcal{J}^P(P_{t,\bf{v}}u,B_R)+\mathcal{J}^P(u,B_R).
\end{align*}
Since $|x-y|<1$ when $x, y \in B_{1/2}$, we have
\begin{align}
\label{6}
    \mathcal{J}(\bar{u},B_R)+\mathcal{J}(\underline{u},B_R)+ 2\int_{B_{1/2}}\int_{B_{1/2}}\left(u(x-t\mbox{$\bf{v}$})-u(x)\right)_+\left(u(y-t\mbox{$\bf{v}$})-u(y)\right)_- \le \mathcal{J}(P_{t, \bf{v}}u,B_R)+\mathcal{J}(u,B_R).
\end{align}
Using this and the stability condition of $u$, and by adding $\mathcal{J}(P_{-t, \bf{v}}u,B_R)-3\mathcal{J}(u,B_R)$ to both sides of \eqref{6}, we have:
\begin{align*}
    \int_{B_{1/2}}\int_{B_{1/2}}\left(u(x-t\mbox{$\bf{v}$})-u(x)\right)_+\left(u(y-t\mbox{$\bf{v}$})-u(y)\right)_- dy dx \le & o(t^2)+\Delta^t_{\bf{v}\bf{v}}\mathcal{J}(u,B_R)\\
    \le & Ct^2\mathcal{J}^s(u,B_R)/R^2, \quad \mbox{by Lemma \ref{2.1},}
\end{align*}
Dividing $t^2$ on both sides and pass to limit as $t \rightarrow 0$, we can conclude \eqref{1}.

Define $A^{\pm}_{\bf{v}}:=\int_{B_{1/2}} \left(\partial _{\bf{v}}u(x)\right)_{\pm} dx$, then by \eqref{1} we have
\begin{align}
    \label{duanlian1}
 \min\{A^+_{\bf{v}}, A^-_{\bf{v}}\} \le \sqrt{A^+_{\bf{v}}A^-_{\bf{v}}} \le \sqrt{C\mathcal{J}^s(u,B_1)}.    
\end{align}
In addition, since $|u| \le 1$ and divergence theorem, 
\begin{align}
\label{duanlian2}
    |A^+_{\bf{v}}-A^-_{\bf{v}}|=\Big| \int_{B_{1/2}} \partial _{\bf{v}}u(x) \Big | \le C
\end{align}
Therefore, \eqref{duanlian1} and \eqref{duanlian2} yield
\begin{align*}
  \int_{B_{1/2}} |\partial_{\bf{v}}u(x)|dx =  A^+_{\bf{v}}+A^-_{\bf{v}}=|A^+_{\bf{v}}-A^-_{\bf{v}}|+2\min\{A^+_{\bf{v}}, A^-_{\bf{v}}\} \le C\left(1+\sqrt{\mathcal{J}^s(u,B_1)}\right). 
\end{align*}This proves \eqref{2}.
\end{proof}

\hfill\\

Now we are in a position to prove Proposition \ref{generalenergyestimates}.
\begin{proof}[proof of Proposition \ref{generalenergyestimates}]
For $0<s<1/2$, combining \eqref{lessthan1/2} for $R=1$ and Lemma \ref{2.2}, we have 
\begin{align}
\label{focus1}
    \int_{B_{1}}|\nabla u| \le C\left(1+\sqrt{C(1+\int_{B_4}|\nabla u|})\right)
\end{align}
By AM-GM inequality and Young's inequality, for $0<\delta<1$, whose choice depends on $d$ and $s$ which will be specified later on, there exists $C>0$ such that 
\begin{align}
\label{key}
    \int_{B_{1}}|\nabla u| \le \delta \int_{B_4}|\nabla u|+C/\delta.
\end{align}

Now we do the scaling argument. For any $x_0 \in \mathbb{R}^d$ and $\rho>0$ with $B_{\rho}(x_0) \subset B_{1}$, let $w(x):=u(x_0+\frac{\rho}{4} x)$, then $w$ is also a stable solution to \eqref{AC1} with $f(x)$ replaced by $\frac{\rho^{2s}}{4^{2s}} f(x_0+\frac{\rho}{4} x)$. Since the estimate above does not depend on $f$, by \eqref{key} we have
\begin{align*}
     \int_{B_{1}}|\nabla w| \le \delta \int_{B_4}|\nabla w|+C/\delta.
\end{align*}that is, 
\begin{align}
\label{iteration}
    \rho^{1-d} \int_{B_{\rho/4}(x_0)} |\nabla u| \le  \delta \rho^{1-d} \int_{B_{\rho}(x_0)}|\nabla u|+C/\delta. 
\end{align}
Then by Simon's Lemma proved in \cite{Simonlemma}, see also (see \cite[Lemma 3.1]{CSE} and \cite[Lemma 2.3]{FS17}), we can choose universal constant $\delta$ depending on $d$ and $s$ such that from \eqref{iteration}, we conclude that \begin{align}
\label{3}
    \int_{B_{1/2}} |\nabla u| \le C, 
\end{align}where $C$ depends only on $d$ and $s$.

Note that \eqref{3} is true for any stable solution $u$ to \eqref{AC1}, hence we can apply \eqref{3} for $u(x_0+2Rx)$, which is also a stable solution to \eqref{AC}, instead of $u(x)$, we have 
\begin{align}
\label{energyestimate1}
    \int_{B_R(x_0)}|\nabla u| \le CR^{d-1}, \quad \forall x_0 \in \mathbb{R}^d
\end{align}
By \eqref{3} and \eqref{lessthan1/2}, we have that for any stable solution $u$ to \eqref{AC1},
\begin{align}
\label{7}
    \mathcal{J}^s(u,B_{1/4}) \le C
\end{align}
Also by scaling property
\begin{align*}
    \mathcal{J}^s(u(x_0+4Rx), B_{1/4})=(4R)^{2s-d}\mathcal{J}^s(u,B_R(x_0)).
\end{align*}
Thus from \eqref{7} we conclude
\begin{align}
    \label{energyestimate2}
    \mathcal{J}^s(u,B_R(x_0)) \le CR^{d-2s}, \quad \forall x_0 \in \mathbb{R}^d
\end{align} These conclude \eqref{gle1} and \eqref{gle2} for the case $0<s<1/2$. 
\hfill\\

Next, we consider the case $1/2\le s<1$. 
By \eqref{biggerthan1/2} and \eqref{2} we have
\begin{align*}
     \int_{B_{1/2}} |\nabla u(x)| dx  \le C\left(1+\sqrt{C\left(1+L_0^{2s-1}\log (2L_0)\int_{B_2}|\nabla u|\right)}\right), 
\end{align*}where $L_0 \ge 2$ is an upper bound for $\Vert \nabla u \Vert_{L^{\infty}(B_1)}$.
Then similar to the argument \eqref{focus1}-\eqref{3}, we have
\begin{align}
\label{sorry}
    \int_{B_{1/2}} |\nabla u(x)| dx \le CL_0^{2s-1}\log(2L_0),
\end{align}
For any $x_0 \in \mathbb{R}^d$, since $u_R(x):=u(2Rx+x_0)$ is also a stable solution to \eqref{AC1} with $f$ replaced by $R^{2s}f$, by \eqref{sorry} we have
\begin{align}
\label{sorry'}
    \int_{B_{1/2}} |\nabla u_R(x)| dx \le CL_R^{2s-1}\log(2L_R),
\end{align}where $L_R \ge 2$ is an upper bound for $\Vert \nabla u_R \Vert_{L^{\infty}(B_1)}=2R\Vert \nabla u \Vert_{L^{\infty}(B_{2R}(x_0))}$. By \cite[Proposition 5.2]{CS17} and since $|u| \le 1$, $\Vert \nabla u \Vert_{L^{\infty}(\mathbb{R}^d)} \le C(d,s)M_0$, and thus we can choose $L_R \le CM_0R$. Hence by \eqref{sorry'} and scaling property we can conclude \eqref{gle1} for the case $1/2 \le s <1$.  Then by \eqref{biggerthan1/2}, and elliptic estimate $L_0 \le CM_0$, we derive \eqref{gle2} for the case $1/2\le s<1$. Note that the constant $C_2$ in \eqref{gle2} for the case $1/2<s<1$ does depend on $f$. However, when $s=1/2$, the constant in \eqref{gle2} does not depend on $f$.
\end{proof}

Now we are ready to validate Conjecture \ref{stable conjecture} for the case $d=2, \, 0<s<1$ in the following theorem.

\begin{theorem}
\label{stable2d}
If $u$ is a stable solution to \eqref{AC1} in $\mathbb{R}^2$, then $u$ is $1$-D.
\end{theorem}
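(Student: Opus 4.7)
The plan is to combine Lemma \ref{2.2} with the energy estimates of Proposition \ref{generalenergyestimates} specialized to $d=2$ in order to show that for every unit vector $v\in S^1$ the directional derivative $\partial_v u$ has a constant sign on all of $\mathbb{R}^2$. Once this sign property is established, a continuity argument in $v$ combined with a nonlocal strong maximum principle will force $\partial_{v_0} u\equiv 0$ for some $v_0\in S^1$, and hence $u$ is $1$-D.

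To produce the sign property I would apply Lemma \ref{2.2} not to $u$ directly but to the rescaled solution $u_\rho(x):=u(\rho x)$, which is again stable (it solves \eqref{AC1} with $f$ replaced by $\rho^{2s}f$, so the universal constant in Lemma \ref{2.2} is unaffected). Using the scaling identities $\int_{B_{1/2}}(\partial_v u_\rho)_\pm\,dx = \rho^{1-d}\int_{B_{\rho/2}}(\partial_v u)_\pm\,dy$ and $\mathcal{J}^s(u_\rho,B_R)=\rho^{2s-d}\mathcal{J}^s(u,B_{\rho R})$, the bound \eqref{1} rewrites as
\begin{equation*}
\Big(\int_{B_{\rho/2}}(\partial_v u)_+\Big)\Big(\int_{B_{\rho/2}}(\partial_v u)_-\Big) \;\le\; C\,\rho^{\,d+2s-2}\,\frac{\mathcal{J}^s(u,B_{\rho R})}{R^{2}}, \qquad \rho>0,\;R\ge 1.
\end{equation*}
Specializing to $d=2$ and invoking \eqref{gle2}, the right-hand side is bounded by $C\rho^{2}R^{-2s}$ when $0<s<1/2$ and by $C\rho^{4s}R^{2s-2}\log^{2}(M_0\rho R)$ when $1/2\le s<1$; in both regimes it vanishes as $R\to\infty$. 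Thus $\bigl(\int_{B_{\rho/2}}(\partial_v u)_+\bigr)\bigl(\int_{B_{\rho/2}}(\partial_v u)_-\bigr)=0$ for every $\rho>0$ and every $v\in S^1$. Since each factor is nondecreasing in $\rho$, the moment one factor is strictly positive at some scale the other must vanish at every scale; therefore for each fixed $v\in S^1$ the function $\partial_v u$ has a global constant sign on $\mathbb{R}^2$.

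To finish, consider the continuous, odd map $\Phi:S^1\to\mathbb{R}$ defined by $\Phi(v):=\int_{B_1}\partial_v u\,dx = v\cdot\int_{B_1}\nabla u\,dx$. By the intermediate value theorem there exists $v_0\in S^1$ with $\Phi(v_0)=0$, and combined with the sign property this forces $\partial_{v_0} u\equiv 0$ on $B_1$. Since $\partial_{v_0} u$ has a global sign (say $\ge 0$) and satisfies the linearized equation $(-\Delta)^s(\partial_{v_0} u)=f'(u)\,\partial_{v_0} u$, evaluating at $x\in B_1$ where $\partial_{v_0} u(x)=0$ reduces the pointwise identity to $-C(d,s)\int_{\mathbb{R}^2}\partial_{v_0} u(y)\,|x-y|^{-d-2s}\,dy=0$, which, since the integrand is nonnegative, forces $\partial_{v_0} u\equiv 0$ on $\mathbb{R}^2$. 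Hence $u$ is independent of $v_0$, i.e.\ $1$-D. The main technical point I expect to be delicate is keeping the scaling exponents honest and making sure the logarithmic factor in the range $1/2\le s<1$ is harmless; however, for $d=2$ the exponent of $R$ in the rescaled bound is strictly negative throughout $s\in(0,1)$, so the log is absorbed and the limit $R\to\infty$ is clean.
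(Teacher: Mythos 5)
Your proposal is correct and follows the same core strategy as the paper: feed the energy estimate of Proposition \ref{generalenergyestimates} into the bilinear bound \eqref{1} of Lemma \ref{2.2}, observe that for $d=2$ the right-hand side $\mathcal{J}^s(u,B_R)/R^2$ tends to zero for every $s\in(0,1)$ (including the logarithmic regime, since $2s-2<0$), conclude that $\partial_{\mathbf v}u$ has a sign for each direction $\mathbf v$, and hence that $u$ is $1$-D. Your write-up is, however, more careful than the paper's on two points that the paper treats tersely. First, you rescale explicitly by $u_\rho(x)=u(\rho x)$ to get the product bound on $B_{\rho/2}$ for every $\rho$, and then use that each factor is nondecreasing in $\rho$ to upgrade the sign condition from ball-by-ball to genuinely global; the paper only asserts constant sign on half-balls and appeals to ``continuity of $u$.'' Second, to extract the null direction $v_0$ you use an odd-map intermediate-value argument plus the nonlocal strong maximum principle for the linearized equation $(-\Delta)^s(\partial_{v_0}u)=f'(u)\,\partial_{v_0}u$; the argument implicit in the paper is the more standard closedness/connectedness one on $S^1$ (the sets $\{v:\partial_v u\ge 0 \text{ on }\mathbb{R}^2\}$ and $\{v:\partial_v u\le 0\text{ on }\mathbb{R}^2\}$ are closed, cover $S^1$, each is nonempty since they are antipodal images of each other, so they intersect), which does not need the maximum principle. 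Both routes are sound once the global sign property is established, so this is a detail-level difference rather than a change of method. The scaling exponents in your display and the claim that the logarithm is absorbed in the range $1/2\le s<1$ both check out.
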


\begin{proof}
By Proposition \ref{generalenergyestimates},  the RHS of \eqref{1} goes to zero as $R \rightarrow \infty$, and hence
\begin{align}
\label{8}
    \left(\int_{B_{1/2}} (\partial_{\bf{v}}u)_+(x)dx\right) \left( \int_{B_{1/2}} (\partial_{\bf{v}}u)_-(y)dy \right)=0. 
\end{align}
Then $u$ is monotone in $B_{1/2}$ along direction $\bf{v}$. Since \eqref{8} is true for any fixed direction $\bf{v}$ and any half ball, by the continuity of $u$ we conclude that $u$ is a $1$-D. 
\end{proof}

\section{Appendix: Proof of Remark \ref{connection}}

\begin{proof}[Proof of Remark \ref{connection}]
It is easy to see that  $u^{\pm \infty}$ are stable solutions in $\mathbb{R}^{d-1}$, and thus $1$-D solutions by hypothesis. By \cite[Theorem 2.12]{CY15},  $u^{\pm \infty}$ are monotone in some directions, and thus by \cite[Lemma 3.1]{DSV16}, $u^{\pm \infty}$ are local minimizers in $\mathbb{R}^{d-1}$. It is thus easy to see they are also local minimizers in $\mathbb{R}^d$.

We will show next that $u$ is also a local minimizer to $J$. We proceed as follows.

For any $\phi \in C_0^{\infty}(\Omega)$ where $\Omega$ is a bounded domain in $\mathbb{R}^d$, we consider local variation $\mathcal{J}(u+\phi)$. Let $m_1=\min\{u+\phi, u^{- \infty}\}$, $M_1=\max\{u+\phi, u^{- \infty}\}$. Hence outside $\Omega$, $m_1(x)=u^{- \infty}(x)$ and thus
\begin{align}
\label{chidao1}
    \mathcal{J}(m_1) \ge \mathcal{J}(u^{- \infty}).
\end{align}
By Lemma \ref{identity}, we have
\begin{align}
\label{chidao2}
    \mathcal{J}^s(u+\phi)+\mathcal{J}^s(u^{-\infty}) \ge \mathcal{J}^s(m_1)+\mathcal{J}^s(M_1).
\end{align}
It is easy to check 
\begin{align}
\label{chidao3}
    \mathcal{J}^P(u+\phi)+\mathcal{J}^P(u^{-\infty}) = \mathcal{J}^P(m_1)+\mathcal{J}^P(M_1).
\end{align}
Hence from \eqref{chidao2}-\eqref{chidao3} we have
\begin{align}
\label{chidao4}
    \mathcal{J}(u+\phi)+\mathcal{J}(u^{-\infty}) \ge \mathcal{J}(m_1)+\mathcal{J}(M_1)
\end{align}
By \eqref{chidao1} and \eqref{chidao4}, we have 
\begin{align}
\label{chidao5}
     \mathcal{J}(u+\phi)\ge \mathcal{J}(M_1).
\end{align}
Let $m_2=\min\{M_1, u^{\infty}\}$ and $M_2=\max\{M_1, u^{\infty}\}$. $m_2$ is a local variation of $u$ in the class 
\begin{align*}
   \{v : u^{-\infty} \le v \le u^{\infty}\}
\end{align*}
Simiarly as in the argument of \cite[Theorem 1]{PSV}, we can see that $u$ is a local minimizer in this class, and hence
\begin{align}
    \label{chidao6}
\mathcal{J}(m_2) \ge \mathcal{J}(u).
\end{align}
$M_2$ is a local variation of $u^{\infty}$ since outside $\Omega$, $M_2=u^{\infty}$. Hence
\begin{align}
    \label{chidao7}
\mathcal{J}(M_2) \ge \mathcal{J}(u^{\infty}).    
\end{align} By Lemma \ref{identity} we have
\begin{align}
\label{chidao8}
    \mathcal{J}(M_1)+\mathcal{J}(u^{\infty}) \ge \mathcal{J}(m_2)+\mathcal{J}(M_2),
\end{align} and hence by \eqref{chidao6}-\eqref{chidao7} it yields
\begin{align}
    \mathcal{J}(M_1) \ge \mathcal{J}(u).
\end{align}By \eqref{chidao5}, we obtain
\begin{align}
    \mathcal{J}(u+\phi) \ge \mathcal{J}(u).
\end{align}
Hence we have proved that $u$ is a minimizer as long as $u^{\pm \infty}$ are $1$-D stable solutions. Then that $u$ is an $1$-D solution when  $4 \le d \le 7, \, s \in (1/2-\epsilon_0,1)$ is from Theorem \ref{minimizer}. For $d \le 3$, this is because of Theorem \ref{longrange7}.
\end{proof}

$\mathbf{Acknowledgement}$  This research is partially supported by NSF grant DMS-1601885.

\end{document}